\mag=1200
\documentclass[final]{siamart}
\topmargin -15mm

\usepackage{amsfonts}

\newcommand{\TheTitle}{Alpert multiwavelets and Legendre-Angelesco multiple orthogonal polynomials} 
\newcommand{\TheAuthors}{Jeffrey S. Geronimo, Plamen Iliev, Walter Van Assche}

\headers{Multiwavelets \&\ Legendre-Angelesco}{\TheAuthors}

\title{{\TheTitle}\thanks{Version of \today}}

\author{
  Jeffrey S. Geronimo\thanks{School of Mathematics, Georgia Institute of Technology, Atlanta, GA 30332-0160
    (\email{geronimo@math.gatech.edu, iliev@math.gatech.edu}).}
  \and
  Plamen Iliev\footnotemark[2]
  \and
  Walter Van Assche\thanks{Department of Mathematics, KU Leuven, Celestijnenlaan 200B box 2400, BE-3001 Leuven, Belgium
    (\email{walter@wis.kuleuven.be}).}
}

\usepackage{amsopn}
\usepackage{hyphenat}
\hyphenation{Le-gen-dre An-ge-les-co multi-wave-lets}

\ifpdf
\hypersetup{
  pdftitle={\TheTitle},
  pdfauthor={\TheAuthors}
}
\fi

\externaldocument{multiwavelets-sup}


\begin{document}

\maketitle

\begin{abstract}
We show that the multiwavelets, introduced by Alpert in 1993, are related to type I Legendre-Angelesco multiple orthogonal polynomials.
We give explicit formulas for these Legendre-Angelesco polynomials and for the Alpert multiwavelets. The multiresolution analysis can
be done entirely using Legendre polynomials, and we give an algorithm, using Cholesky factorization, to compute the
multiwavelets and a method, using the Jacobi matrix for Legendre polynomials, to compute the matrices in the scaling relation for any size of the multiplicity of the multiwavelets.   
\end{abstract}

\begin{keywords}
  Alpert multiwavelets, Legendre-Angelesco polynomials, Legendre polynomials, hypergeometric functions
\end{keywords}

\begin{AMS}
  42C40, 65T60, 33C20, 33C45
\end{AMS}

\section{Introduction}
Wavelets are a powerful way for approximating functions and have been very successful in harmonic analysis, Fourier analysis,
signal and image processing and approximation theory \cite{Daub}, \cite{Keinert}. Multiresolution analysis, as introduced by Mallat and Meyer
\cite{Mallat,Meyer}, is a powerful way to describe functions in $L^2(\mathbb{R})$ in a nested sequence of subspaces, and wavelets are very well
suited to move from one resolution to higher resolutions. Multiwavelets are an extension of wavelets where instead of using a single
scaling function one uses a vector scaling function, which allows to describe the spaces in the multiresolution analysis
in terms of translates of linear combinations of the functions in the scaling vector. Multiwavelets came up in a natural way
in fractal interpolation and iterated functions theory \cite{DonGerHar, DonGerHarMas, GerHarMas, HarKesMas} and were also put forward by Goodman 
\cite{Goodman94, Goodman93} and Herv\'e \cite{Herve}. A good description of multiwavelets can be found in \cite[part II]{Keinert} and \cite{Keinert0}. 
In this paper we will investigate the multiwavelets described by Alpert \cite{Alpert}. Previously Geronimo, Marcell\'an and Iliev
investigated a modification of Alpert's multiwavelets in \cite{GerMar} and \cite{GerIliev}, but they used an orthogonal basis for the wavelets 
with fewer zero moments than proposed by Alpert in \cite{Alpert}. In this paper we will investigate the wavelets with the maximal number of zero moments, as originally proposed by Alpert. 
 
The paper is organized as follows. We give some information on Alpert's multiwavelets in
\cref{sec:Alpert}, we make the connection with type I multiple orthogonal polynomials in
\cref{sec:MultOP} and give explicit formulas for the Legendre-Angelesco multiple orthogonal polynomials
in \cref{sec:Leg-Ang}. A formula for Alpert's multiwavelets in terms of the Legendre-Angelesco polynomials
is given in \cref{sec:AlpertLA} including explicit expressions for their Fourier transforms.
An algorithm to expand the multiwavelets in terms of Legendre polynomials is given in \cref{sec:AlpertLeg}.

\section{Alpert multiwavelets}
\label{sec:Alpert}

Let $n \in \mathbb{N} = \{1,2,3,\ldots\}$ be fixed and consider the functions
\[   \phi_j(x) = \frac{1}{\sqrt{2j-1}} P_{j-1}(2x-1) \chi_{[0,1)}(x), \qquad 1 \leq j \leq n, \]
where $P_j$ $(0 \leq j \leq n-1)$ are the Legendre polynomials on $[-1,1]$ and $\chi_{[0,1)}$ is the characteristic function
of the interval $[0,1)$. If we set
\[    \Phi_n(x) = \begin{pmatrix} \phi_1(x) \\ \vdots \\ \phi_n(x)  \end{pmatrix} , \]
then $\Phi_n$ is a vector of compactly supported $L^2$-functions with
\[    \int_0^1 \Phi_n(x)\Phi_n^T(x)\, dx = \mathbb{I}_n, \]
where $\mathbb{I}_n$ is the identity matrix of order $n$. Introduce the linear space
\[    V_0 = \textrm{cl}_{L^2(\mathbb{R})} \textrm{span} \{ \phi(\cdot - j), 1 \leq i \leq n,\ j\in \mathbb{Z} \}, \]
then $V_0$ is a space of piecewise polynomials of degree $\leq n-1$ and it is a finitely generated shift invariant space,
for which $\{ \phi_i(x-j), 1 \leq i \leq n, j \in \mathbb{Z} \}$ is an orthonormal basis. For every $k \in \mathbb{Z}$ we define
$V_k = \{ \phi(2^k \cdot), \phi \in V_0\}$, so that $V_k$ contains functions in $L^2(\mathbb{R})$ at different resolutions.
Clearly the spaces $(V_k)_{k \in \mathbb{Z}}$ are nested
\[   \cdots \subset V_{-2} \subset V_{-1} \subset V_0 \subset V_1 \subset V_2 \subset \cdots \subset L^2(\mathbb{R}), \]
and they form a multiresolution analysis (MRA) of $L^2(\mathbb{R})$. We can write
\[     V_{k+1} = V_k \oplus W_k, \qquad k \in \mathbb{Z}, \]
and the spaces $(W_k)_{k \in \mathbb{Z}}$ are called the wavelet spaces. If $\psi_1,\ldots,\psi_n$ generate a shift-invariant
basis for $W_0$, then these functions are known as multiwavelets when $n > 1$. For $n=1$ one retrieves the MRA associated
with the Haar wavelet. Multiwavelets for $n>1$ were investigated by Alpert \cite{Alpert}, Geronimo and Marcell\'an \cite{GerMar},
Geronimo and Iliev \cite{GerIliev}. 
  
In \cite{Alpert} Alpert constructs functions $f_1,\ldots,f_n$ supported on $[-1,1]$ with the following properties:
\begin{enumerate}
 \item The restriction of $f_i$ to $(0,1)$ is a polynomial of degree $n-1$;  \label{Alpert1}
 \item $f_k(-t) = (-1)^{k+n-1} f_k(t)$ for $t \in (0,1)$;  \label{Alpert2}
 \item $f_1,\ldots,f_n$ have the orthogonality property   \label{Alpert3}
\begin{equation}  \label{fortho}
    \int_{-1}^1 f_i(t)f_j(t)\, dt = \delta_{i,j}, \qquad 1 \leq i,j \leq n;
\end{equation} 
 \item The function $f_k$ has vanishing moments   \label{Alpert4}
\begin{equation} \label{fmoments} 
   \int_{-1}^1 f_k(t)t^i \, dt = 0, \qquad i=0,1,\ldots,k+n-2. 
\end{equation}
\end{enumerate}

Our observation is the following
\begin{proposition}  \label{prop:1}
The last function $f_n$ is, up to a normalization factor, the type I Legendre-Angelesco multiple orthogonal polynomial
\[   f_n(x) = A_{n,n}(x)\chi_{[-1,0)} + B_{n,n}(x) \chi_{[0,1)}.  \]
\end{proposition}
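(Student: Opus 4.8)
The plan is to identify Alpert's four conditions on $f_n$ with the defining conditions of a type I Legendre-Angelesco multiple orthogonal polynomial on the two intervals $[-1,0]$ and $[0,1]$. The key observation is that conditions \eqref{Alpert1} and \eqref{Alpert2} force $f_n$ to be a piecewise polynomial: on $(0,1)$ it is a polynomial $B(x)$ of degree $n-1$, and on $(-1,0)$ it is the reflection $A(x) = (-1)^{2n-1} B(-x) = -B(-x)$ (taking $k = n$ in the symmetry relation). Thus $f_n$ is completely determined by a single polynomial of degree $n-1$, which I would match against the Legendre-Angelesco type I data. First I would write down explicitly what the type I Legendre-Angelesco conditions require of the pair $(A_{n,n}, B_{n,n})$—namely orthogonality of the piecewise polynomial against all monomials up to the appropriate degree—and then show the moment conditions \eqref{fmoments} are exactly these.

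The central step is a dimension count combined with the vanishing-moment conditions. For $k = n$, the moments \eqref{fmoments} read $\int_{-1}^1 f_n(t) t^i\,dt = 0$ for $i = 0, 1, \ldots, 2n-2$, which is $2n-1$ linear conditions. Since $f_n$ is governed by a single degree $n-1$ polynomial (that is, $n$ free coefficients), these conditions appear overdetermined, so the next step is to exploit the parity in \eqref{Alpert2}. Because $f_n$ has definite parity (it is an odd or even function of $t$ depending on $n$), roughly half of the moment conditions are automatically satisfied: the integral of $f_n(t)t^i$ against monomials of the wrong parity vanishes identically. I would carefully track which parity $f_n$ carries—it is $f_n(-t) = (-1)^{2n-1}f_n(t) = -f_n(t)$, so $f_n$ is odd—and conclude that the conditions for even $i$ hold trivially, leaving precisely the conditions for odd $i$, whose count matches the remaining degrees of freedom up to the one-dimensional normalization ambiguity.

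The heart of the argument is then to rewrite the surviving moment conditions as the Angelesco orthogonality of the type I polynomial. Splitting $\int_{-1}^1 = \int_{-1}^0 + \int_0^1$ and substituting $A_{n,n}$ and $B_{n,n}$, I would change variables on the negative interval ($t \mapsto -t$) to fold both pieces onto $[0,1]$ and show that the resulting conditions are equivalent to the standard type I Legendre-Angelesco requirement that the linear form built from $(A_{n,n}, B_{n,n})$ be orthogonal to polynomials of the correct degree on each interval separately. The main obstacle I anticipate is bookkeeping the exact degree ranges: I must verify that the number and index-range of the vanishing moments in \eqref{fmoments}, after the parity reduction, match the multi-index associated to the type I Legendre-Angelesco polynomial with \emph{both} components of degree $n-1$, and that the normalization \eqref{fortho} pins down the single remaining scalar. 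Once the linear systems are shown to coincide and each has a one-dimensional solution space, uniqueness (up to the stated normalization factor) yields $f_n(x) = A_{n,n}(x)\chi_{[-1,0)} + B_{n,n}(x)\chi_{[0,1)}$ as claimed.
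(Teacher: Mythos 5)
Your overall route is the one the paper takes: conditions \ref{Alpert1} and \ref{Alpert2} make $f_n$ a piecewise polynomial whose two pieces have degree $\leq n-1$, the vanishing moments \cref{fmoments} with $k=n$ are matched with the defining orthogonality \cref{Angelortho} of the type I pair $(A_{n,n},B_{n,n})$, and uniqueness of the type I polynomial up to a scalar finishes the argument. In the paper this match is immediate: \cref{fmoments} with $k=n$ \emph{is} \cref{Angelortho} with $m=n$, verbatim, with no parity reduction or folding needed. Your parity and folding computations are correct but are not required here; they reproduce instead what the paper does later, in the proof of \cref{prop:fpq}, to identify $f_n$ with $p_{n-1}$ on $(0,1]$.

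The genuine problem is the target of what you call the heart of the argument: you plan to show that the surviving moment conditions are equivalent to the type I requirement that the form built from $(A_{n,n},B_{n,n})$ be ``orthogonal to polynomials of the correct degree on each interval separately.'' Orthogonality on each interval separately is the \emph{type II} condition (the one satisfied by $P_{n,m}$), not the type I condition: \cref{Angelortho} constrains only the sum $\int_{-1}^0 A_{n,n}(x)x^k\,dx + \int_0^1 B_{n,n}(x)x^k\,dx$. If you were to demand that each integral vanish separately for $0 \leq k \leq 2n-2$ (or even just up to $n-1$), then $A_{n,n}$ and $B_{n,n}$, being of degree $n-1$, would be orthogonal to themselves and hence identically zero; concretely, $B_{n,n}$ is proportional to $p_{n-1}$, whose zeroth moment on $[0,1]$ equals $(-1)^{n-1}(\frac12)_{n-1}/n! \neq 0$ by \cref{Mellinp}, so interval-wise orthogonality already fails at $k=0$. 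What your folding (with $A(x)=-B(-x)$) actually yields is that the summed conditions reduce to the vanishing of the odd moments $\int_0^1 B(t)t^i\,dt$ for $i=1,3,\ldots,2n-3$ --- an equivalence with \cref{Angelortho} for odd piecewise forms, not with separate orthogonality. Replace the target statement accordingly and your proof closes. One further small caveat: the one-dimensionality of the solution space that you invoke at the end does not follow from your dimension count, which only shows the dimension is at least one; it is the known uniqueness of type I polynomials for Angelesco systems, stated in the paper right after \cref{Angelnorm}, and it must be cited as such.
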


Of course we need to explain what the type I Legendre-Angelesco polynomial is. In the next section we explain what multiple orthogonal polynomials
are and in particular we define the type I and type II Legendre-Angelesco polynomials. \Cref{prop:1} then follows immediately from
the definition of type I Legendre Angelesco polynomials. We will then also explain how the other multiwavelets $f_1,\ldots,f_{n-1}$
can be expressed in terms of the Legendre-Angelesco polynomials.

\section{Multiple orthogonal polynomials}
\label{sec:MultOP}

Let $r \in \mathbb{N}$ and $\vec{n}=(n_1,\ldots,n_r) \in \mathbb{N}^r$ with size $|\vec{n}|=n_1+n_2+\ldots+n_r$. 
Multiple orthogonal polynomials are polynomials in one variable
that satisfy orthogonality conditions with respect to $r$ measures $\mu_1,\ldots,\mu_r$ on the real line. There are two types: 
the type I multiple orthogonal polynomials for the multi-index $\vec{n}$ are $(A_{\vec{n},1},\ldots,A_{\vec{n},r})$, where the degree
of $A_{\vec{n},j}$ is $\leq n_j-1$ and
\[    \sum_{j=1}^r \int x^k A_{\vec{n},j}(x)\, d\mu_j(x) = 0, \qquad 0 \leq k \leq |\vec{n}|-2, \]
with the normalization
\[    \sum_{j=1}^r \int x^{|\vec{n}|-1} A_{\vec{n},j}(x)\, d\mu_j(x) = 1,  \]
and the type II multiple orthogonal $P_{\vec{n}}$ is the monic polynomial of degree $|\vec{n}|$ for which
\[   \int x^k P_{\vec{n}}(x)\, d\mu_j(x) = 0, \qquad 0 \leq k \leq n_j-1, \]
for $1 \leq j \leq r$ (see, e.g., \cite[Chapter 26]{Ismail}). If the measures are supported on disjoint intervals, then the system
$\mu_1,\ldots,\mu_r$ is called an Angelesco system. In this paper we will deal with two measures $(r=2)$ with $\mu_1$ the uniform
measure on $[-1,0]$ and $\mu_2$ the uniform measure on $[0,1]$, and the corresponding multiple orthogonal polynomials are called
Legendre-Angelesco polynomials. These polynomials were introduced by Angelesco in 1918--1919 and investigated in detail by
Kalyagin and Ronveaux \cite{Kalyagin,KalyaRon}.    

The type I Legendre-Angelesco polynomials $(A_{n,m}, B_{n,m})$ are thus defined by $\deg A_{n,m} = n-1$, $\deg B_{n,m} = m-1$ and the
orthogonality conditions
\begin{equation}   \label{Angelortho}
  \int_{-1}^1 \Bigl( A_{n,m}(x) \chi_{[-1,0]} + B_{n,m}(x) \chi_{[0,1]} \Bigr) x^k \, dx = 0, \qquad 0 \leq k \leq n+m-2, 
\end{equation}
and the normalization
\begin{equation}   \label{Angelnorm}
   \int_{-1}^1 \Bigl( A_{n,m}(x) \chi_{[-1,0]} + B_{n,m}(x) \chi_{[0,1]} \Bigr) x^{n+m-1} \, dx = 1.  
\end{equation}
These polynomials are uniquely determined by these conditions.
Note that the orthogonality \cref{Angelortho} corresponds to the condition for the vanishing moments (condition \ref{Alpert4}) for $k=n$,
hence proving \cref{prop:1}. The normalization, however, is different
since one uses condition \ref{Alpert3} instead of \cref{Angelnorm}. Note that the type I Legendre-Angelesco polynomials are piecewise polynomials 
on $[-1,1]$ but their restrictions to $[-1,0]$ and $[0,1]$ are respectively the polynomials $A_{n,m}$ (of degree $n-1$) and $B_{n,m}$ (of degree $m-1$).

The remaining multiwavelets $f_1,\ldots,f_{n-1}$ can also be expressed in terms of Legendre\hyp Angelesco orthogonal polynomials.
Denote the type I multiple orthogonal polynomials by
\[    Q_{n,m}(x) = A_{n,m}(x) \chi_{[-1,0]} + B_{n,m}(x) \chi_{[0,1]}. \]
As mentioned before, these are piecewise polynomials on $[-1,1]$ as are the multiwavelets.

\begin{proposition}  \label{prop:2}
Let $(f_1,\ldots,f_n)$ be the Alpert multiwavelets of multiplicity $n$. Then
\begin{equation*}  \label{fkQ}
   f_k(x) = \begin{cases}  \sum_{j=\frac{k+n}2}^n c_j Q_{j,j}(x), & \textrm{if $k+n$ is even}, \\
                           \frac12 \sum_{j=\frac{k+n+1}2}^n a_j d_{j-1} Q_{j,j}(x) + \sum_{j=\frac{k+n-1}2}^{n-1} d_j Q_{j+1,j}(x), & \textrm{if $k+n$ is odd},
               \end{cases}
\end{equation*}
where the coefficients $(a_j), (c_j), (d_j)$ are real numbers.
\end{proposition}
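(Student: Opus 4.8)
The plan is to recast everything inside finite-dimensional spaces of piecewise polynomials and to reduce the statement to a dimension count. Let $\mathcal{P}_m$ denote the space of piecewise polynomials on $[-1,1]$ having degree $\le m-1$ on each of $(-1,0)$ and $(0,1)$, so $\dim \mathcal{P}_m = 2m$. By properties \ref{Alpert1}--\ref{Alpert2} every $f_k$ lies in $\mathcal{P}_n$, and by construction $Q_{j,m} \in \mathcal{P}_{\max(j,m)}$. On $\mathcal{P}_n$ I would use the moment functionals $g \mapsto \int_{-1}^1 g(t)\,t^i\,dt$ and their nested kernels
\[ \mathcal{M}_\ell = \Bigl\{ g \in \mathcal{P}_n : \int_{-1}^1 g(t)\,t^i\,dt = 0,\ 0 \le i \le \ell-1 \Bigr\}. \]
Testing against $\chi_{[-1,0)}q$ and $\chi_{[0,1)}q$ with $\deg q \le n-1$ shows these functionals are linearly independent for $0 \le i \le 2n-1$, so $\dim \mathcal{M}_\ell = 2n-\ell$. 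Since the two measures of the Angelesco system are interchanged by $x \mapsto -x$, each $\mathcal{M}_\ell$ is reflection-invariant and splits as $\mathcal{M}_\ell = \mathcal{M}_\ell^+ \oplus \mathcal{M}_\ell^-$; because odd moments of even functions and even moments of odd functions vanish automatically, $\dim \mathcal{M}_\ell^+ = n - \lceil \ell/2\rceil$ and $\dim \mathcal{M}_\ell^- = n - \lfloor \ell/2\rfloor$.

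In this language property \ref{Alpert4} reads $f_k \in \mathcal{M}_{k+n-1}$, while property \ref{Alpert2} says $f_k$ is odd when $k+n$ is even and even when $k+n$ is odd. The same reflection-plus-uniqueness argument that yields \cref{prop:1} gives $Q_{j,j}(-x) = -Q_{j,j}(x)$, so each diagonal polynomial is odd and lies in $\mathcal{M}_{2j-1}^-$. This disposes of the case $k+n$ even at once: the functions $Q_{j,j}$ with $(k+n)/2 \le j \le n$ are odd, lie in $\mathcal{M}_{k+n-1}^-$, are linearly independent because their one-sided degrees strictly increase with $j$, and number $n-(k+n)/2+1 = \dim \mathcal{M}_{k+n-1}^-$; hence they form a basis and $f_k = \sum_{j=(k+n)/2}^n c_j Q_{j,j}$.

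For $k+n$ odd the function $f_k$ is even, so I would assemble a basis of $\mathcal{M}_{k+n-1}^+$ from the off-diagonal polynomials. The crucial observation is that the odd part of $Q_{j+1,j}$ is a nonzero multiple $\gamma_j Q_{j+1,j+1}$: odd elements of $\mathcal{P}_{j+1}$ whose moments vanish up to degree $2j-1$ form a one-dimensional space spanned by $Q_{j+1,j+1}$, and the odd part is nonzero since $Q_{j+1,j}$ and its reflection have different one-sided degrees. Writing $g_j := Q_{j+1,j} - \gamma_j Q_{j+1,j+1}$ then produces even functions with $g_j \in \mathcal{M}_{2j}^+$; for $(k+n-1)/2 \le j \le n-1$ they lie in $\mathcal{M}_{k+n-1}^+$, are independent by degree, and number $n - (k+n-1)/2 = \dim \mathcal{M}_{k+n-1}^+$, hence form a basis. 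Expanding $f_k = \sum_j d_j g_j$, substituting the definition of $g_j$, and reindexing the sum carrying the terms $Q_{j+1,j+1}=Q_{i,i}$ recovers exactly the stated expression, the constants being absorbed into $a_j := -2\gamma_{j-1}$.

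The main obstacle is the one-dimensionality lemma identifying the odd part of $Q_{j+1,j}$ with a multiple of $Q_{j+1,j+1}$, together with the bookkeeping that certifies each proposed spanning family as a basis; both hinge on the perfectness of the Legendre-Angelesco system, i.e., on the existence and uniqueness of the $Q_{j,m}$ with the prescribed degrees, which I would import from the general Angelesco theory or from the explicit formulas of \cref{sec:Leg-Ang}. The remaining ingredients --- the parity identities, the independence-by-degree arguments, and the final reindexing that exhibits the coefficients $a_j$, $c_j$, $d_j$ --- are then routine once the flag $\{\mathcal{M}_\ell^\pm\}$ and its dimensions are in hand.
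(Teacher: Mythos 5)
Your proof is correct, but it follows a genuinely different route from the paper's. The paper expands $f_k$ in the full basis $\{Q_{j,j}\}\cup\{Q_{j+1,j}\}$ and then computes the coefficients via biorthogonality with the type II polynomials, $c_j = \int_{-1}^1 f_k P_{j,j-1}\,dx$ and $d_j = \int_{-1}^1 f_k P_{j,j}\,dx$; the symmetry of $f_k$ is then played off against the parity of the type II polynomials, and the key identity $P_{j,j-1}-P_{j-1,j} = a_j P_{j-1,j-1}$ (obtained by subtracting the two nearest-neighbor recurrence relations) yields $d_j=0$ in the even case and $2c_j = a_j d_{j-1}$ in the odd case, after which the moment conditions truncate the sums. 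You avoid type II polynomials, biorthogonality and recurrences altogether: you split the moment-kernel flag $\mathcal{M}_\ell$ by parity, match dimensions against the number of candidate basis elements, and replace the paper's recurrence identity with its type I counterpart --- that the odd part of $Q_{j+1,j}$ is a nonzero multiple of $Q_{j+1,j+1}$ --- proved by a one-dimensionality count. (That identity is in fact visible in the paper's later explicit formulas \cref{Bn+1n}--\cref{Bnn+1}, where $Q_{j+1,j}-Q_{j,j+1}$ is manifestly proportional to $p_j$.) What the paper's argument buys is explicitness: integral representations \cref{cdj} for the coefficients and the identification of $a_j$ with a difference of nearest-neighbor recurrence coefficients, which ties into the multiple-orthogonal-polynomial machinery used elsewhere. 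What your argument buys is self-containedness at the level of linear algebra, at the cost of leaning on the independence of the moment functionals and on exact-degree statements, i.e., on perfectness of the Angelesco system; you flag this import honestly, and it is no heavier than what the paper itself assumes without proof when it asserts that $\{Q_{j,j}\}\cup\{Q_{j+1,j}\}$ is a basis (for Lebesgue weights the independence follows from the classical sign-change count: a polynomial of degree $\leq 2n-1$ orthogonal to $\mathbb{P}_{n-1}$ on both $[-1,0]$ and $[0,1]$ would need $2n$ sign changes). One small point worth making explicit in your write-up: your constants $\gamma_{j-1}$, hence your $a_j = -2\gamma_{j-1}$, are intrinsic to the polynomials and independent of $k$, which is exactly the structural content of the factorization $\tfrac12 a_j d_{j-1}$ in the statement; since the expansion in the basis is unique, your $a_j$ necessarily coincides with the paper's.
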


\begin{proof}
The type I multiple orthogonal polynomials $\{Q_{j,j}, 1 \leq j \leq n\}$ on the diagonal, together with $\{Q_{j+1,j}, 0 \leq j \leq n-1\}$ 
are a basis for the piecewise
polynomials $p_{n-1}\chi_{[-1,0]} + q_{n-1} \chi_{[0,1]}$, where $p_{n-1},q_{n-1} \in \mathbb{P}_{n-1}$ are polynomials of degree at most $n-1$.
There is a biorthogonality relation for the multiple orthogonal polynomials \cite[\S 23.1.3]{Ismail}
\[    \int_{-1}^1 P_{n,m}(x) Q_{k,\ell}(x)\, dx = \begin{cases} 0 & \textrm{if $k \leq n$ and $\ell \leq m$}, \\
                                                                0 & \textrm{if $n+m \leq k+\ell-2$}, \\
                                                                1 & \textrm{if $n+m=k+\ell-1$},  \end{cases}  \]
where $P_{n,m}$ are the type II Legendre-Angelesco multiple orthogonal polynomials, for which
\begin{eqnarray*}
   \int_{-1}^0 P_{n,m}(x) x^k\, dx &=& 0, \qquad 0 \leq k \leq n-1, \\
   \int_0^1 P_{n,m}(x) x^k \, dx  &=& 0, \qquad 0 \leq k \leq m-1. 
\end{eqnarray*}
The biorthogonality gives for the multi-indices near the diagonal
\[    \int_{-1}^1 P_{j,j}(x) Q_{k,k}(x)\, dx = 0 = \int_{-1}^1 P_{j+1,j}(x)Q_{k+1,k}(x)\, dx , \qquad j,k \in \mathbb{N}, \]
\[    \int_{-1}^1 P_{j,j}(x)Q_{k+1,k}(x)\, dx = \delta_{k,j} = \int_{-1}^1 P_{j,j-1}(x)Q_{k,k}(x)\, dx.  \]
So if we expand $f_k$ in the basis of type I Legendre-Angelesco polynomials, then
\begin{equation}  \label{fkQ1}
   f_k(x) = \sum_{j=1}^n c_j Q_{j,j}(x) + \sum_{j=0}^{n-1} d_j Q_{j+1,j}(x), 
\end{equation}
and the biorthogonality gives
\begin{equation}  \label{cdj}
     c_j = \int_{-1}^1 f_k(x) P_{j,j-1}(x)\, dx ,   \quad
     d_j = \int_{-1}^1 f_k(x) P_{j,j}(x)\, dx.   
\end{equation}
The conditions imposed by Alpert give some properties for these coefficients. The symmetry property \ref{Alpert2} gives (change variables $x \to -x$)
\[    d_j = \int_{-1}^1 f_k(-x) P_{j,j}(-x)\, dx = (-1)^{k+n+1} \int_{-1}^1 f_k(x) P_{j,j}(x)\, dx = (-1)^{k+n+1} d_j, \]
so that $d_j = 0$ whenever $k+n$ is even. We have used that the type II multiple orthogonal polynomial $P_{j,j}$ is an even function.
For $c_j$ we have (with the same change of variables)
\[   c_j = \int_{-1}^1 f_k(-x) P_{j,j-1}(-x)\, dx = (-1)^{k+n} \int_{-1}^1 f_k(x) P_{j-1,j}(x)\, dx, \]
where we used that $P_{j,j-1}(-x)=-P_{j-1,j}(x)$, which can easily be seen from the orthogonality conditions.
The type II multiple orthogonal polynomials satisfy the nearest neighbor recurrence relations
\begin{eqnarray*}
   xP_{n,m}(x) = P_{n+1,m}(x) + c_{n,m} P_{n,m}(x) + a_{n,m}P_{n-1,m}(x) + b_{n,m} P_{n,m-1}(x), \\
   xP_{n,m}(x) = P_{n,m+1}(x) + d_{n,m} P_{n,m}(x) + a_{n,m}P_{n-1,m}(x) + b_{n,m} P_{n,m-1}(x). 
\end{eqnarray*}
Subtracting both relations gives
\[   P_{n+1,m}(x)-P_{n,m+1}(x) = (d_{n,m}-c_{n,m}) P_{n,m}(x), \]
so that $P_{j,j-1}(x)-P_{j-1,j}(x) = a_j P_{j-1,j-1}(x)$, where $a_j = d_{j-1,j-1}-c_{j-1,j-1}$. Using this we find
\[    c_j = (-1)^{k+n} \int_{-1}^1 f_k(x) \Bigl( P_{j,j-1}(x) - a_j P_{j-1,j-1}(x) \Bigr)\, dx = (-1)^{k+n} (c_j - a_j d_{j-1}), \]
and hence we have that $2c_j = a_j d_{j-1}$ whenever $k+n$ is odd. So the expansion \cref{fkQ1} reduces to
\begin{equation}  \label{fkQ2}
   f_k(x) = \begin{cases}  \sum_{j=1}^n c_j Q_{j,j}(x), & \textrm{if $k+n$ is even}, \\
                           \frac12 \sum_{j=1}^n a_j d_{j-1} Q_{j,j}(x) + \sum_{j=0}^{n-1} d_j Q_{j+1,j}(x), & \textrm{if $k+n$ is odd}.
               \end{cases}
\end{equation} 
Alpert's moment condition \ref{Alpert4} also shows that 
\[     \begin{cases}
          c_j = 0, & \textrm{if $2j-1 \leq k+n-2$}, \\
          d_j = 0, & \textrm{if $2j \leq k+n-2$}, \end{cases}  \]
so that the expansion \cref{fkQ2} reduces to the one given in the proposition.
\end{proof}

\section{Legendre-Angelesco polynomials}
\label{sec:Leg-Ang}

It is now important to find the Legendre\hyp Angelesco polynomials. We introduce two families of polynomials
\begin{align}   
    p_n(x) &= \sum_{k=0}^n \binom{n}{k} \binom{n+\frac{k}2}{n} (-1)^{n-k} x^k, \label{pn} \\
    q_n(x) &= \sum_{k=0}^n \binom{n}{k} \binom{n+\frac{k-1}2}{n} (-1)^{n-k} x^k. \label{qn}
\end{align}
Observe that $p_n$ and $q_n$ are polynomials of degree $n$ with positive leading coefficient.
These two families are immediately related to the last two multiwavelets, and this holds for every multiplicity $n$.
We will need the Mellin transform of these polynomials to make the connection.

\begin{proposition}  \label{prop:Mellin}
The Mellin transforms of the polynomials $p_n$ and $q_n$ on $[0,1]$ are given by
\begin{equation}   \label{Mellinp}
   \int_0^1 p_n(x) x^s \, dx = (-1)^n \frac{(\frac12 - \frac{s}2)_n}{(s+1)_{n+1}}, 
\end{equation}
and
\begin{equation}   \label{Mellinq}
    \int_0^1 q_n(x) x^s\, dx = (-1)^n \frac{(-\frac{s}{2})_n}{(s+1)_{n+1}}.
\end{equation}
\end{proposition}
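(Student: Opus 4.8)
The plan is to treat both Mellin transforms as identities between rational functions of $s$ and to verify them by matching partial-fraction (residue) data, handling $p_n$ and $q_n$ in parallel. First I would carry out the Mellin transform term by term: since $\int_0^1 x^{k+s}\,dx = 1/(s+k+1)$, the left-hand side of \cref{Mellinp} becomes the explicit finite sum $\sum_{k=0}^n \binom{n}{k}\binom{n+k/2}{n}(-1)^{n-k}/(s+k+1)$, and likewise for \cref{Mellinq} with $\binom{n+(k-1)/2}{n}$ in place of $\binom{n+k/2}{n}$. Both of these, and both claimed right-hand sides, are proper rational functions of $s$ that vanish as $s\to\infty$ and whose only singularities are simple poles located at $s=-1,-2,\dots,-(n+1)$.

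Because a proper rational function with only simple poles equals the sum of its principal parts, it suffices to show that the two sides of each identity have the same residue at every $s=-(k+1)$, $0\le k\le n$. On the left the residue is immediate: only the $k$-th term is singular there, so it equals $\binom{n}{k}\binom{n+k/2}{n}(-1)^{n-k}$ (respectively with $\binom{n+(k-1)/2}{n}$ for $q_n$). For the right-hand side I would write $(s+1)_{n+1}=\prod_{j=1}^{n+1}(s+j)$ and read off the residue at $s=-(k+1)$ as $(-1)^n(\tfrac12-\tfrac{s}2)_n\big/\prod_{j\ne k+1}(s+j)$ evaluated at $s=-(k+1)$. The two ingredients are the Pochhammer value $(\tfrac12-\tfrac{s}2)_n\big|_{s=-(k+1)}=(\tfrac{k}2+1)_n=n!\binom{n+k/2}{n}$, and the finite product $\prod_{j=1,\,j\ne k+1}^{n+1}(j-k-1)=(-1)^k k!\,(n-k)!$. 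Combining these gives residue $(-1)^{n-k}\binom{n}{k}\binom{n+k/2}{n}$, matching the left-hand side; for $q_n$ the identical computation with $(-\tfrac{s}2)_n\big|_{s=-(k+1)}=(\tfrac{k+1}2)_n=n!\binom{n+(k-1)/2}{n}$ closes the argument.

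The main obstacle is purely in this last bookkeeping: correctly evaluating the half-integer Pochhammer symbol at the pole, pinning down the sign $(-1)^k$ of the split finite product (over the ranges $j<k+1$ and $j>k+1$), and checking that it assembles into the factor $(-1)^{n-k}$. A secondary point that must be recorded, so that the residue argument is valid, is that neither side carries a polynomial part: the left-hand sums are visibly $O(1/s)$, and each right-hand side is a ratio of a degree-$n$ numerator over the degree-$(n+1)$ denominator $(s+1)_{n+1}$, hence also vanishes at infinity. Once the residues are matched at all $n+1$ poles and the absence of a polynomial part is noted, equality of the two rational functions follows, proving both \cref{Mellinp} and \cref{Mellinq} simultaneously.
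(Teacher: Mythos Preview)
Your proposal is correct and follows essentially the same approach as the paper: both integrate term by term, recognize the result as a proper rational function of $s$ with simple poles at $-1,\dots,-(n+1)$, and identify it with the claimed closed form by matching residues (the paper phrases this as interpolating the numerator polynomial $T(s)$ at the points $-k-1$, you phrase it as comparing residues of both sides, but the computations are identical).
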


\begin{proof}
The proof is along the lines of \cite[Lemma 4]{GerIliev} and \cite[Thm.~2.2]{SmetWVA}.
We will compute the Mellin transform of $p_n$, which by integrating \cref{pn} is
\[   \int_0^1 p_n(x) x^s\, dx = \sum_{k=0}^n \binom{n}{k} \binom{n+\frac{k}2}{n} \frac{(-1)^{n-k}}{k+s+1}.  \]
This is the partial fraction decomposition of a rational function $T(s)/R(s)$ with poles at the integers
$\{-k-1, 0 \leq k \leq n\}$, so $R(s) = (s+1)_{n+1}$ and $T$ is a polynomial of degree $\leq n$ with residue
$(-1)^{n-k} \binom{n}{k} \binom{n+\frac{k}2}{n}$ for the pole at $-k-1$, hence 
\[     \frac{T(-k-1)}{R'(-k-1)} = \binom{n}{k} \binom{n+\frac{k}2}{n} (-1)^{n-k} .  \]
Observe that $R'(-k-1) = (-1)^k k! (n-k)!$ so that
\[    T(-k-1) = (-1)^n \binom{n+\frac{k}2}{n} n! = (-1)^n (\frac{k}{2}+1)_n , \qquad 0 \leq k \leq n.   \]
The polynomial $T(s) = (-1)^n (\frac12 - \frac{s}2)_n$ of degree $n$ has precisely these values at $-k-1$, and hence \cref{Mellinp} follows.
Observe that the Mellin transform of $p_n$ is a rational function of $s$ and that the numerator
has zeros when $s=2k-1$ $(1 \leq k \leq n)$, so that all the odd moments $\leq 2n-1$ of $p_n$ on $[0,1]$ vanish.
The Mellin transform of $q_n$ can be obtained easily in a similar way. From \cref{Mellinq} one sees that $q_n$ has vanishing 
even moments $\leq 2n-2$.
\end{proof}

The last two multiwavelets $f_n$ and $f_{n-1}$ are now easily given in terms of these polynomials.

\begin{proposition}  \label{prop:fpq}
Let $f_1,\ldots,f_n$ be Alpert's multiwavelets of multiplicity $n$. Then for $x \in [0,1]$
\[    f_n(x) = c_{n,0} p_{n-1}(x), \quad   f_{n-1}(x) = d_{n,0} q_{n-1}(x), \]
where $c_{n,0}$ and $d_{n,0}$ are normalizing constants given by
\[ \frac{1}{c_{n,0}^2} = 2 \int_0^1 p_{n-1}^2(x)\, dx, \quad \frac{1}{d_{n,0}^2} = 2 \int_0^1  q_{n-1}^2(x)\, dx.  \]
\end{proposition}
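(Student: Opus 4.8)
The plan is to use only Alpert's degree restriction \ref{Alpert1}, his symmetry condition \ref{Alpert2}, and his vanishing-moment condition \ref{Alpert4} to pin down $f_n$ and $f_{n-1}$ on $[0,1]$ up to a multiplicative constant, matching them against $p_{n-1}$ and $q_{n-1}$ through the moment information recorded in \cref{prop:Mellin}; the orthonormality \ref{Alpert3} then fixes the constant.

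First I would treat $f_n$. Taking $k=n$ in \ref{Alpert2} gives $f_n(-t)=(-1)^{2n-1}f_n(t)=-f_n(t)$, so $f_n$ is odd, and I set $g:=f_n|_{[0,1]}\in\mathbb{P}_{n-1}$. Splitting $\int_{-1}^1 f_n(t)t^i\,dt$ over the two half-intervals and substituting $t\mapsto -t$ on $[-1,0]$, oddness makes every even moment vanish automatically, while for odd $i$ one gets $\int_{-1}^1 f_n(t)t^i\,dt = 2\int_0^1 g(t)t^i\,dt$. Hence condition \ref{Alpert4} (moments up to $i=2n-2$) is equivalent to $\int_0^1 g(t)t^i\,dt=0$ for the odd exponents $i=1,3,\dots,2n-3$. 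By \cref{prop:Mellin} the polynomial $p_{n-1}$ satisfies exactly these $n-1$ conditions, since its odd moments up to $2n-3$ vanish, and $p_{n-1}$ is nonzero because its leading coefficient is positive.

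Next I would show that these $n-1$ conditions determine $g$ up to a scalar. Writing $g(t)=\sum_{m=0}^{n-1}g_m t^m$, they read $\sum_{m=0}^{n-1} g_m/(m+i+1)=0$, a homogeneous system whose coefficient matrix has entries $1/(2j+m)$ with $j=1,\dots,n-1$ and $m=0,\dots,n-1$. This is a Cauchy matrix: each maximal minor is a square Cauchy determinant, which is nonzero because the nodes $\{2j\}$ and $\{m\}$ are respectively distinct and all sums $2j+m$ are positive. Thus the system has rank $n-1$, its solution space in $\mathbb{P}_{n-1}$ is one-dimensional, and $g=c_{n,0}p_{n-1}$ for a constant $c_{n,0}$. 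The only real obstacle is precisely this nondegeneracy step; once the Cauchy structure is recognized it is immediate. For the normalization, condition \ref{Alpert3} with $i=j=n$ gives $\int_{-1}^1 f_n^2\,dt=1$, and since $f_n^2$ is even this equals $2\int_0^1 f_n^2\,dt = 2c_{n,0}^2\int_0^1 p_{n-1}^2\,dt$, yielding $1/c_{n,0}^2 = 2\int_0^1 p_{n-1}^2\,dt$.

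The case of $f_{n-1}$ runs entirely parallel. With $k=n-1$, condition \ref{Alpert2} has exponent $2n-2$, so $f_{n-1}$ is even; its odd moments then vanish automatically and \ref{Alpert4} (moments up to $2n-3$) reduces to $\int_0^1 h(t)t^i\,dt=0$ for the even exponents $i=0,2,\dots,2n-4$, where $h:=f_{n-1}|_{[0,1]}$. These are exactly the vanishing even moments of $q_{n-1}$ recorded in \cref{prop:Mellin}, and the same Cauchy-matrix argument (now with entries $1/(2j+1+m)$, $j=0,\dots,n-2$) forces $h=d_{n,0}q_{n-1}$. Finally $\int_{-1}^1 f_{n-1}^2\,dt=1$ together with the evenness of $f_{n-1}^2$ gives $1/d_{n,0}^2=2\int_0^1 q_{n-1}^2\,dt$.
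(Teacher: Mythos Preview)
Your proof is correct and follows essentially the same route as the paper: reduce via the parity in \ref{Alpert2} to moment conditions on $[0,1]$, match these against the vanishing moments of $p_{n-1}$ and $q_{n-1}$ from \cref{prop:Mellin}, and fix the constant with \ref{Alpert3}. The one substantive addition is your explicit Cauchy-matrix rank argument for uniqueness; the paper instead takes for granted that Alpert's conditions determine $f_n$ and $f_{n-1}$ up to scalars, so your version is a bit more self-contained.
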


\begin{proof}
We need to verify the moment condition \cref{fmoments} for $f_n$. Observe that
\begin{align*}
   \int_{-1}^1 f_n(x) x^k \, dx &= \int_0^1  f_n(x) x^k\, dx + \int_{-1}^0 f_n(x) x^k\, dx \\
                              &= \bigl(1 + (-1)^{k+1}\bigr) \int_0^1 f_n(x) x^k \, dx,
\end{align*}
so that all the even moments are already zero. So we only need to concentrate on the odd moments of $p_{n-1}$.
The Mellin transform \cref{Mellinp} for $p_{n-1}$ shows that the odd moments $\leq 2n-3$ vanish, and since all the even moments 
for $f_n$ are already zero, it must follow from \cref{fmoments} that $f_n$ is equal to $p_{n-1}$ up to a normalizing factor.

The proof for $f_{n-1}$ is similar with a few changes. The symmetry $f_{n-1}(-x) = f_{n-1}(x)$  now gives
\[  \int_{-1}^1  f_{n-1}(x) x^k \, dx = \bigl(1 + (-1)^{k}\bigr) \int_0^1 f_{n-1}(x) x^k \, dx, \]
so that all the odd moments vanish. We only need to check that the even moments vanish. The orthogonality of $f_n$ and $f_{n-1}$
will automatically be true because of the symmetry:
\[   \int_{-1}^1 f_n(x)f_{n-1}(x)\, dx = \int_0^1 f_n(x)f_{n-1}(x)\, dx - \int_{0}^1 f_n(x)f_{n-1}(x)\, dx = 0. \]
For $q_{n-1}$ we have from \cref{Mellinq} that all the even moments $\leq 2n-4$
vanish, so that together with the vanishing odd moments, the moment condition \cref{fmoments} holds and $f_{n-1}$ must be proportional to $q_{n-1}$.
\end{proof}

An explicit formula for the normalizing constants $c_{n,0}$ and $d_{n,0}$ will be given in \cref{prop:ppqq}.

\subsection{Type I Legendre-Angelesco polynomials}
\label{sec:Leg-AngI}

The type I Legendre\hyp Angelesco polynomials near the diagonal can be expressed explicitly in terms of the functions 
$p_n$ and $q_n$ in \cref{pn}--\cref{qn}:

\begin{proposition} \label{prop:ABpq}
Let $(A_{n,m},B_{n,m})$ be the type I Legendre-Angelesco polynomial for the multi-index $(n,m)$. Then on the diagonal
\begin{equation}   \label{Bp}
   B_{n+1,n+1}(x) =  \frac{1}{2} \frac{(3n+2)!}{n!(2n+1)!} p_n(x), \quad  A_{n+1,n+1}(x) = -B_{n+1,n+1}(-x). 
\end{equation}
and for $|n-m|=1$
\begin{align}
    b_n B_{n+1,n}(x) &= \binom{n+\frac{n}2}{n} q_n(x) - \binom{n+\frac{n-1}2}{n} p_n(x), \label{Bn+1n} \\
    b_n B_{n,n+1}(x) &= \binom{n+\frac{n}2}{n} q_n(x) + \binom{n+\frac{n-1}2}{n} p_n(x), \label{Bnn+1}
\end{align}
and
\begin{equation}  \label{AB}
   A_{n+1,n}(x) = B_{n,n+1}(-x), \quad A_{n,n+1}(x) =  B_{n+1,n}(-x),  
\end{equation}
where the normalizing constant is given by $b_n = 2(\frac{n}2 +1)_n \frac{(2n)!}{(3n+1)!}$.
\end{proposition}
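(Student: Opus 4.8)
The plan is to verify that each proposed formula satisfies the defining orthogonality conditions \cref{Angelortho} together with the normalization \cref{Angelnorm}, invoking the uniqueness of type I Legendre-Angelesco polynomials. For the diagonal case \cref{Bp}, I would use the symmetry relation $A_{n+1,n+1}(x) = -B_{n+1,n+1}(-x)$ to reduce the orthogonality integral over $[-1,1]$ to an integral over $[0,1]$. Concretely, writing $Q_{n+1,n+1}(x) = A_{n+1,n+1}(x)\chi_{[-1,0]} + B_{n+1,n+1}(x)\chi_{[0,1]}$, the substitution $x \to -x$ on the negative half turns the orthogonality condition into
\[
   \int_0^1 B_{n+1,n+1}(x)\bigl(1+(-1)^{k+1}\bigr) x^k\, dx = 0, \qquad 0 \leq k \leq 2n,
\]
so the even moments vanish automatically by the symmetry and only the odd moments $k=1,3,\ldots,2n-1$ impose conditions. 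These are exactly the odd moments of $p_n$ on $[0,1]$, which vanish by \cref{prop:Mellin} (the numerator of \cref{Mellinp} has zeros at $s=2\ell-1$ for $1 \leq \ell \leq n$). Thus $B_{n+1,n+1}$ is proportional to $p_n$, and the explicit constant is pinned down by evaluating the normalization \cref{Angelnorm} at $s = 2n+1$ using the Mellin transform once more.

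For the near-diagonal cases $|n-m|=1$, the functions $Q_{n+1,n}$ and $Q_{n,n+1}$ do not have definite parity, so I would instead exploit the reflection relations \cref{AB} together with the linear combinations of $p_n$ and $q_n$. The key observation is that under $x \to -x$, the even-degree part and odd-degree part of a polynomial separate, and the choice of coefficients $\binom{n+n/2}{n}$ and $\binom{n+(n-1)/2}{n}$ in \cref{Bn+1n}--\cref{Bnn+1} is designed so that the combination $A_{n+1,n}(x)\chi_{[-1,0]} + B_{n+1,n}(x)\chi_{[0,1]}$ has, via \cref{AB}, all moments $\int_{-1}^1 Q_{n+1,n}(x)x^k\,dx$ vanishing for $0 \leq k \leq 2n-1$. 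Here both the vanishing even moments of $q_n$ (from \cref{Mellinq}) and the vanishing odd moments of $p_n$ (from \cref{Mellinp}) are used simultaneously: the $p_n$ and $q_n$ contributions combine so that each moment condition is met by one family while the other contributes zero. The degree requirements $\deg A_{n+1,n} = n$, $\deg B_{n+1,n} = n-1$ (and the symmetric pair) also constrain which coefficient must multiply $p_n$ versus $q_n$, and matching these degrees is what forces the asymmetric $\pm$ signs between \cref{Bn+1n} and \cref{Bnn+1}.

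The main obstacle will be determining the precise normalizing constants, rather than merely establishing proportionality. Proving that a given polynomial satisfies the $2n-1$ or $2n$ homogeneous orthogonality relations is a clean consequence of \cref{prop:Mellin}, but fixing the constant $b_n = 2(\frac{n}{2}+1)_n \frac{(2n)!}{(3n+1)!}$ requires evaluating the single nonhomogeneous normalization integral \cref{Angelnorm} explicitly. This amounts to computing $\int_0^1 p_n(x)x^{2n+1}\,dx$ and $\int_0^1 q_n(x)x^{2n}\,dx$ from the Mellin transforms \cref{Mellinp}--\cref{Mellinq} at $s = 2n+1$ and $s = 2n$ respectively, and then simplifying the resulting Pochhammer and factorial expressions into the stated closed form. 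This simplification, together with carefully tracking the signs and the degree-parity bookkeeping in the near-diagonal case, is where the bulk of the routine but error-prone computation lies; everything else follows structurally from the uniqueness of type I polynomials and the moment-vanishing properties already established.
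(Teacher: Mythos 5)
Your proposal is correct and follows essentially the same route as the paper: verify the defining orthogonality conditions \cref{Angelortho} and degree constraints via parity and the Mellin transforms \cref{Mellinp}--\cref{Mellinq}, then fix the constants by evaluating the normalization \cref{Angelnorm} at $s=2n+1$ (for $p_n$) and $s=2n$ (for $q_n$). The only difference is cosmetic: the paper obtains the diagonal case by citing \cref{prop:1} and \cref{prop:fpq} (whose proofs contain exactly your symmetry-plus-Mellin argument), and it leaves the near-diagonal verification to the reader, which you carry out explicitly.
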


\begin{proof}
The result in \cref{Bp} follows from \cref{prop:1} and \cref{prop:fpq}.
The constant can be found by putting $s=2n+1$ in the Mellin transform, since that moment has to be 1. 
For \cref{Bn+1n}--\cref{AB} one can check that the vanishing moment conditions \cref{Angelortho}
hold and that the polynomials are of the correct degree. The normalizing constant $b_n$ can be obtained
by computing the $2n$th moment of $q_n$, i.e., putting $s=2n$ in \cref{Mellinq}, and the using the normalization
in \cref{Angelnorm}.
\end{proof}

A consequence of the Mellin transform \cref{Mellinp} is the Rodrigues type formula
\[    p_n(x) = \frac{(-1)^n}{2^n n!} (D^*)^n x^{2n}(1-x)^n, \qquad  D^* = \frac{1}{x} \frac{d}{dx}, \]
which follows by using
\[    (s-2k-1) \hat{f}(s) = - \widehat{D_k f}(s), \qquad   D_k f(x) = x^{-2k-1} \frac{d}{dx} x^{2k+2} f(x) , \]
where $\hat{f}$ is the Mellin transform
of $f$
\[     \hat{f}(s) = \int_0^\infty f(x) x^s\, dx,  \]
(see, e.g.,  \cite[\S 2.2]{SmetWVA}). In a similar way the Mellin transform \cref{Mellinq} implies the Rodrigues type formula
\[     q_n(x) = \frac{(-1)^n}{2^n n!} x (D^*)^n x^{2n-1}(1-x)^n. \]

The $p_n$ and $q_n$ can be written as linear combinations of two hypergeometric functions
\begin{multline*}
   p_n(x) = (-1)^n {}_3F_2\left( \begin{array}{c} n+1, - \frac{n}2, -\frac{n}2+\frac12 \\ \frac12, 1 \end{array} ; x^2 \right)  \\
    - (-1)^n \frac{(\frac32)_n}{(n-1)!} x\ {}_3F_2\left( \begin{array}{c} n+\frac32, - \frac{n}2+\frac12, -\frac{n}2+1 \\ \frac32, \frac32 \end{array} ; 
   x^2\right), 
\end{multline*}
and
\begin{multline*}
   q_n(x) = (-1)^n \frac{(\frac12)_n}{n!} {}_3F_2\left( \begin{array}{c} n+\frac12, - \frac{n}2, -\frac{n}2+\frac12 \\ \frac12, \frac12 \end{array} ; 
   x^2\right) \\
    - (-1)^n n x\  {}_3F_2\left( \begin{array}{c} n+1, - \frac{n}2+\frac12, -\frac{n}2+1 \\ 1 , \frac32 \end{array} ; x^2 \right). 
\end{multline*}
There is also a system of recurrence relations. 

\begin{proposition}   \label{prop:rec}
One has
\begin{align}
    (3n-1)x p_{n-1}(x) &= 2n q_n(x) + (2n-1) q_{n-1}(x), \label{xpn} \\
     (3n-2) x q_{n-1}(x) &= \frac23 \Bigl( n p_n(x) + (2n-1) p_{n-1}(x) + (n-1) p_{n-2}(x) \Bigr).  \label{xqn}
\end{align}
\end{proposition}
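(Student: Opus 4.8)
The plan is to verify the two recurrence relations \cref{xpn}--\cref{xqn} directly at the level of Mellin transforms, using \cref{prop:Mellin}, since multiplying a polynomial by $x$ shifts the Mellin parameter and both sides are polynomials of the stated degrees. Concretely, for a polynomial $g$ supported on $[0,1]$ one has $\int_0^1 x g(x)\, x^s\, dx = \hat{g}(s+1)$, so applying $\int_0^1 (\cdot)\, x^s\, dx$ to each proposed identity turns it into an identity between rational functions of $s$ built from $(\pm\tfrac{s}{2})_k$ and $(s+1)_{k+1}$ factors. Two compactly supported polynomials agree iff their Mellin transforms agree as functions of $s$ (the moments determine the polynomial), so it suffices to check each transformed identity as an equality of rational functions.

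First I would handle \cref{xpn}. The left-hand side is $(3n-1)\int_0^1 x\,p_{n-1}(x)\,x^s\,dx = (3n-1)\,\widehat{p_{n-1}}(s+1)$, which by \cref{Mellinp} equals $(3n-1)(-1)^{n-1}\dfrac{(-\tfrac{s}{2})_{n-1}}{(s+2)_{n}}$. The right-hand side is $2n\,\widehat{q_n}(s) + (2n-1)\,\widehat{q_{n-1}}(s)$, and by \cref{Mellinq} these are $(-1)^n$ times $\dfrac{(-\tfrac{s}{2})_n}{(s+1)_{n+1}}$ and $\dfrac{(-\tfrac{s}{2})_{n-1}}{(s+1)_{n}}$ respectively. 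I would extract the common factor $(-1)^{n-1}(-\tfrac{s}{2})_{n-1}$ from every term, reduce all three expressions over the common denominator $(s+1)_{n+1}$, and use $(-\tfrac{s}{2})_n = (-\tfrac{s}{2})_{n-1}\bigl(-\tfrac{s}{2}+n-1\bigr)$ together with $(s+2)_n (s+1) = (s+1)_{n+1}$ to collect everything into a single rational expression whose numerator is linear in $s$; matching coefficients of the two resulting linear polynomials gives the identity.

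The relation \cref{xqn} is handled the same way: the left-hand side transforms to $(3n-2)\,\widehat{q_{n-1}}(s+1) = (3n-2)(-1)^{n-1}\dfrac{(-\tfrac{s+1}{2})_{n-1}}{(s+2)_n}$, while the right-hand side is $\tfrac23\bigl(n\,\widehat{p_n}(s) + (2n-1)\widehat{p_{n-1}}(s) + (n-1)\widehat{p_{n-2}}(s)\bigr)$, each term evaluated by \cref{Mellinp} in terms of $(\tfrac12-\tfrac{s}{2})_k = (-\tfrac{s+1}{2})_k$. Here too a common Pochhammer factor $(-\tfrac{s+1}{2})_{n-2}$ can be pulled out of all four terms, after which one reduces to a common denominator $(s+1)_{n+1}$ and checks equality of the two low-degree numerator polynomials. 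I expect the main obstacle to be purely bookkeeping: correctly aligning the three different Pochhammer lengths in \cref{xqn} and tracking the sign and index shifts so that the numerators land as polynomials of the same (small) degree. Once everything is over a common denominator the verification is a finite coefficient comparison, which could alternatively be confirmed by evaluating both sides at a sufficient number of integer values of $s$; but the cleanest route is the Pochhammer factorization above rather than expanding the binomial sums in \cref{pn}--\cref{qn} termwise.
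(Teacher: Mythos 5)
Your route is valid but genuinely different from the paper's. The paper proves \cref{xpn}--\cref{xqn} by directly comparing the coefficients of $(-x)^k$ on both sides, manipulating the binomial sums \cref{pn}--\cref{qn}; you instead transfer each identity to the Mellin side via \cref{prop:Mellin}, using that a polynomial on $[0,1]$ is determined by its moments, so that each relation becomes an identity between rational functions of $s$. This works, and it buys a cleaner endgame: for \cref{xpn}, after extracting $(-1)^{n-1}(-\tfrac{s}{2})_{n-1}$ and clearing denominators against $(s+1)_{n+1}$, the whole verification is the single linear identity $-2n\bigl(-\tfrac{s}{2}+n-1\bigr)+(2n-1)(s+n+1)=(3n-1)(s+1)$, and for \cref{xqn} both sides reduce to $(3n-2)(s+1)^2/2$ over $(s+1)_{n+1}$ --- two fixed low-degree checks, versus the paper's binomial bookkeeping carried out for every power $k$. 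Two slips in your intermediate formulas need fixing, though neither harms the method. First, by \cref{Mellinq} the transform of $q_{n-1}$ carries the sign $(-1)^{n-1}$, not $(-1)^n$; if you kept your sign, the coefficient match above would fail. Second, $\tfrac12-\tfrac{s}{2}=-\tfrac{s-1}{2}$, not $-\tfrac{s+1}{2}$, so the Mellin transform of $p_k$ involves $(-\tfrac{s-1}{2})_k$; accordingly, the common factor to pull out in \cref{xqn} is $(-\tfrac{s-1}{2})_{n-2}$, and the left-hand Pochhammer then factors as $(-\tfrac{s+1}{2})_{n-1}=\bigl(-\tfrac{s+1}{2}\bigr)\bigl(-\tfrac{s-1}{2}\bigr)_{n-2}$. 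With these corrections both reductions close exactly as you outline; your treatment of \cref{xpn} is already exact, since there the shift $s\mapsto s+1$ turns $\tfrac12-\tfrac{s}{2}$ into $-\tfrac{s}{2}$.
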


\begin{proof}
We will prove this by comparing coefficients. For \cref{xpn} the coefficient of $(-x)^k$ on the right hand side is
\[      (-1)^n \left( 2n \binom{n}{k} \binom{n+\frac{k-1}2}{n} - (2n-1) \binom{n-1}{k} \binom{n-1+\frac{k-1}{2}}{n-1} \right).  \]
This can easily be seen to be equal to
\[      (-1)^n \frac{(1+\frac{k-1}{2})_{n-1}}{k!(n-k)!}  \left( 2n(n+\frac{k-1}2) - (2n-1)(n-k) \right).  \]
The factor between brackets is $k(3n-1)$, hence together this gives
\[   (-1)^{n}(3n-1)  \frac{(1+\frac{k-1}{2})_{n-1}}{(k-1)!(n-k)!} = (-1)^{n}(3n-1) \binom{n-1}{k-1} \binom{n-1+\frac{k-1}2}{n-1}, \]
which is the coefficient of $(-x)^k$ of the left hand side of \cref{xpn}.
The proof is similar for \cref{xqn}, except that one now has to combine three terms on the right hand side.
\end{proof} 

These recurrence relations can be simplified in the following way. Introduce one more sequence of polynomials
\begin{equation}  \label{rn}
   r_n(x) = \sum_{k=0}^n \binom{n}{k} \binom{n+\frac{k+1}2}{n} (-1)^{n-k} x^k  , 
\end{equation}
then
\begin{align*}
     xr_{n-1}(x) &= \frac{2}{3} \Bigl( p_{n-1}(x) + p_{n}(x) \Bigr),  \\
     (3n-1)x p_{n-1}(x) &= 2n q_n(x) + (2n-1) q_{n-1}(x), \\
     (3n+1)q_{n}(x) &= (n+1)r_{n}(x) + n r_{n-1}(x),
\end{align*}
or in matrix form
\[  \begin{pmatrix} p_n(x) \\ q_n(x) \\ r_n(x) \end{pmatrix}
     = \begin{pmatrix}  -1 & 0 & \frac32 x \\[4pt]
            \frac{3n-1}{2n} x & - \frac{2n-1}{2n} & 0 \\[4pt]
            \frac{(3n+1)(3n-1)}{2n(n+1)} x & - \frac{(3n+1)(2n-1)}{2n(n+1)} & - \frac{n}{n+1} \end{pmatrix} 
       \begin{pmatrix} p_{n-1}(x) \\ q_{n-1}(x) \\ r_{n-1}(x) \end{pmatrix} .  \]

\subsection{Type II Legendre-Angelesco polynomials}
\label{sec:Leg-AngII}

The type II Legendre\hyp Angelesco polynomials $P_{n,n}(x)$ and $P_{n+1,n}(x)$ also have nice Mellin transforms.

\begin{proposition}  \label{typeIImellin}
One has
\begin{equation}   \label{PnnMellin}
    \int_0^1 P_{n,n}(x) x^s\, dx = C_n \frac{(-s)_n}{(\frac12+\frac{s}2)_{n+1}} 
\end{equation}
and
\begin{equation}   \label{Pn1nMellin}
    \int_0^1 [P_{n+1,n}(x)+P_{n,n+1}(x)] x^s\, dx = D_n \frac{(-s)_n}{(1+\frac{s}2)_{n+1}} ,  
\end{equation}
where $C_n$ and $D_n$ are constants.
\end{proposition}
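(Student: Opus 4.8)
The plan is to determine each Mellin transform from its pole structure together with its zeros, exactly as in the proof of \cref{prop:Mellin}. The key preliminary observation is a parity statement: since the reflection $x \mapsto -x$ interchanges the two measures (the uniform measure on $[-1,0]$ and the uniform measure on $[0,1]$), the symmetric type II polynomial $P_{n,n}$ is even, and, as already noted from the orthogonality conditions, $P_{n+1,n}(-x) = -P_{n,n+1}(x)$, so that the combination $P_{n+1,n}(x) + P_{n,n+1}(x)$ is odd. These parities are what convert each Mellin transform into a rational function whose denominator skips half of the integers.

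First I would treat $P_{n,n}$. Writing $P_{n,n}(x) = \sum_{j=0}^{n} a_{2j} x^{2j}$, where only even powers survive by parity (with $a_{2n}=1$ since $P_{n,n}$ is monic of degree $2n$), the Mellin transform on $[0,1]$ is the partial fraction sum $\sum_{j=0}^{n} a_{2j}/(s+2j+1)$. Its poles therefore lie exactly at the odd negative integers $s=-1,-3,\ldots,-(2n+1)$, which are precisely the zeros of $(\frac12+\frac s2)_{n+1} = 2^{-(n+1)}\prod_{i=0}^{n}(s+2i+1)$. Hence $\int_0^1 P_{n,n}(x)x^s\,dx = N(s)/(\frac12+\frac s2)_{n+1}$ for a polynomial $N$, and because the partial fraction sum is $O(1/s)$ while the denominator has degree $n+1$, $N$ has degree at most $n$. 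Finally, the evenness of $P_{n,n}$ collapses its two orthogonality relations into the single family $\int_0^1 P_{n,n}(x)x^k\,dx = 0$ for $0\le k\le n-1$, so $N$ vanishes at $s=0,1,\ldots,n-1$; a polynomial of degree at most $n$ with these $n$ prescribed roots is a constant multiple of $s(s-1)\cdots(s-n+1)=(-1)^n(-s)_n$, with nonzero constant since $P_{n,n}\not\equiv 0$. This gives \cref{PnnMellin}.

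The off-diagonal case runs along identical lines for $S(x):=P_{n+1,n}(x)+P_{n,n+1}(x)$. By oddness $S(x)=\sum_{j=0}^{n} b_{2j+1}x^{2j+1}$, so its Mellin transform on $[0,1]$ has poles exactly at the even negative integers $s=-2,-4,\ldots,-(2n+2)$, which match the zeros of $(1+\frac s2)_{n+1}$, and the same $O(1/s)$ bound forces the numerator to have degree at most $n$. The place where care is required is the bookkeeping of surviving orthogonality conditions: for $0\le k\le n-1$ both $\int_0^1 P_{n+1,n}(x)x^k\,dx$ and $\int_0^1 P_{n,n+1}(x)x^k\,dx$ vanish, whereas at $k=n$ only the second integral is forced to be zero, so exactly the $n$ zeros $s=0,1,\ldots,n-1$ are available, no more and no fewer. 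This again pins the numerator down to a multiple of $(-s)_n$ and yields \cref{Pn1nMellin}.

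I expect the main obstacle to be conceptual rather than computational: recognizing that one must pass to the symmetric polynomial $P_{n,n}$ and to the antisymmetric combination $P_{n+1,n}+P_{n,n+1}$ in order to obtain a clean pole pattern at one parity class of integers, and then tracking precisely which orthogonality relations survive that passage so that the number of prescribed numerator roots agrees with the degree bound. Once the parity and the surviving-zero count are correctly matched, the rational-function argument closes immediately and the explicit values of $C_n$ and $D_n$ can, if desired, be read off afterward by evaluating at a convenient $s$.
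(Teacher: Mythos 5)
Your proposal is correct and follows essentially the same route as the paper: use parity ($P_{n,n}$ even, $P_{n+1,n}+P_{n,n+1}$ odd) to place the poles of the Mellin transform at one parity class of negative integers, giving the denominator $(\tfrac12+\tfrac{s}{2})_{n+1}$ or $(1+\tfrac{s}{2})_{n+1}$, then use the surviving orthogonality conditions on $[0,1]$ to force the $n$ numerator zeros at $s=0,1,\ldots,n-1$, so the numerator is a constant multiple of $(-s)_n$. Your explicit bookkeeping of which orthogonality conditions survive at $k=n$ (only that for $P_{n,n+1}$) and the $O(1/s)$ degree bound are the same ingredients the paper uses, just stated more carefully.
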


\begin{proof}
The orthogonality conditions for $P_{n,n}$ are
\[   \int_{-1}^0 P_{n,n}(x) x^k \, dx = 0 = \int_0^1 P_{n,n}(x) x^k\, dx , \qquad 0 \leq k \leq n-1.  \]
The symmetry $P_{n,n}(-x)=P_{n,n}(x)$ implies that $P_{n,n}(x)$ is an even polynomial and we only need to consider
the orthogonality conditions on $[0,1]$. Let
\[   P_{n,n}(x) = \sum_{j=0}^n a_j x^{2j}, \]
then the Mellin transform is
\[    \int_0^1 P_{n,n}(x) x^s\, dx = \sum_{j=0}^n \frac{a_j}{2j+s+1}.  \]
This is a rational function of $s$ with poles at the odd negative integers $-1,-3,-5,\ldots,$ $-2n-1$, 
hence the denominator is $(\frac{1}{2}+\frac{s}2)_{n+1}$.
The orthogonality conditions on $[0,1]$ imply that the numerators has zeros at $0,1,\ldots,n-1$, hence the numerator is proportional to
$(-s)_n$. This gives \cref{PnnMellin}, where the constant $C_n$ has to be determined so that $P_{n,n}$ is a monic polynomial.

The orthogonality conditions for $P_{n+1,n}$ and $P_{n,n+1}$ are
\[    \int_{-1}^0 P_{n+1,n}(x) x^k \, dx = 0 = \int_{0}^1 P_{n,n+1}(x) x^k\, dx , \qquad 0 \leq k \leq n, \]
and
\[    \int_0^1 P_{n+1,n}(x) x^k\, dx = 0 = \int_{-1}^0 P_{n,n+1}(x) x^k\, dx, \qquad 0 \leq k \leq n-1.  \]
The difference $P_{n+1,n}(x)-P_{n,n+1}(x)$ is a polynomial of degree $2n$ which is proportional to $P_{n,n}$, and $P_{n+1,n}(-x) = - P_{n,n+1}(x)$.
The sum $P_{n+1,n}+P_{n,n+1}$ is therefore an odd polynomial and we can write
\[   P_{n+1,n}(x)+P_{n,n+1}(x) = x \sum_{j=0}^n b_j x^{2j}.  \]
The Mellin transform of this sum is of the form
\[   \int_0^1  [P_{n+1,n}(x)+P_{n,n+1}(x)] x^s\, dx = \sum_{j=0}^n \frac{b_j}{2j+s+2}, \]
and this is a rational function with poles at the even negative integers $-2,-4,-6,\ldots,$ $-2n-2$. Hence the denominator is
$(1+\frac{s}2)_{n+1}$. The orthogonality conditions on $[0,1]$ imply that the numerator vanishes at $0,1,\ldots,n-1$, hence it is proportional to
$(-s)_n$. This gives \cref{Pn1nMellin}, where the constant $D_n$ has to be determined so that the leading coefficient is 2.  
\end{proof}

The Mellin transform \cref{PnnMellin} implies the Rodrigues formula
\[    P_{n,n}(x) =  (-1)^n\frac{(2n)!}{(3n)!} \frac{d^n}{dx^n} x^n(1-x^2)^n,  \]
if we take into account that
\[    \int_0^1 (1-x^2)^n x^s\, dx = \frac{1}{2} \frac{n!}{(\frac12+\frac{s}2)_{n+1}}.   \]
This Rodrigues formula was already known, e.g., \cite[\S 23.3]{Ismail}, and it gives the explicit expression
\[    P_{n,n}(x) = \frac{n! (2n)!}{(3n)!} \sum_{k=0}^n \binom{n}{k} \binom{n+2k}{2k} (-1)^{n-k} x^{2k}.  \]

\section{Alpert multiwavelets and Legendre-Angelesco polynomials}
\label{sec:AlpertLA}

\Cref{prop:fpq} already gives the last two multiwavelets $f_n$ and $f_{n-1}$ in terms of the polynomials $p_{n-1}$ and $q_{n-1}$.
In order to be able to work with multiwavelets of various multiplicity, we will change the notation somewhat. If $n \in \mathbb{N}$
we will denote the multiwavelets of multiplicity $n$ by $(f_1^n,f_2^n,\ldots,f_n^n)$, and these correspond to what we so far denoted as
$(f_1,f_2,\ldots,f_n)$. \Cref{prop:fpq} then says that for $x \in [0,1]$
\[      f_{n+1}^{n+1}(x) = c_{n+1,0} p_n(x), \quad f_n^{n+1}(x) = d_{n+1,0} q_n(x).  \]
To find the other multiwavelets, we proceed as follows. Every $f^{n+1}_{n+1-2k}$ is a linear combination of
$p_n, p_{n-1}, \ldots, p_{n-k}$ in such a way that the orthogonality \cref{fortho} holds. This means that we start from the sequence
$(p_n,p_{n-1},\ldots,p_{n-k})$ and we use the Gram-Schmidt process to obtain $(f^{n+1}_{n+1}, f^{n+1}_{n-1}, \ldots, f^{n+1}_{n+1-2k})$. 
In a similar way every $f^{n+1}_{n-2k}$ is a linear combination of $q_n,q_{n-1},q_{n-k}$ in such a way that the orthogonality \cref{fortho} holds.
Hence we use Gram-Schmidt on $(q_n,q_{n-1},\ldots,q_{n-k})$ to obtain $(f^{n+1}_n,f^{n+1}_{n-2},\ldots,f^{n+1}_{n-2k})$.
Observe that the Gram-Schmidt process is applied to the polynomials $(p_n)_n$ and $(q_n)_n$ starting from degree $n$ and going down in the degree,
which is different from the usual procedure where one starts from the lowest degree, going up to the highest degree.
If we denote by $\langle f,g \rangle$ the integral of $fg$ over the interval $[0,1]$, then an explicit formula for $x \in (0,1]$ is
\begin{equation} \label{fn+12k}
   f^{n+1}_{n+1-2k}(x) = c_{n+1,k}  \begin{vmatrix}
           \langle p_n,p_n \rangle & \langle p_n,p_{n-1} \rangle & \cdots & \langle p_n, p_{n-k} \rangle \\
           \langle p_{n-1},p_n \rangle & \langle p_{n-1},p_{n-1} \rangle & \cdots & \langle p_{n-1}, p_{n-k} \rangle \\
             \vdots & \vdots & \cdots & \vdots \\
           \langle p_{n-k+1},p_n \rangle & \langle p_{n-k+1},p_{n-1} \rangle & \cdots & \langle p_{n-k+1},p_{n-k} \rangle \\ 
            p_n(x) & p_{n-1}(x) & \cdots & p_{n-k}(x)   
              \end{vmatrix}, 
\end{equation}
where $c_{n+1,k}$ is such that the norm of $f^{n+1}_{n+1-2k}$ is one. Similarly
\begin{equation}  \label{fn2k}
   f^{n+1}_{n-2k}(x) = d_{n+1,k}  \begin{vmatrix}
           \langle q_n,q_n \rangle & \langle q_n,q_{n-1} \rangle & \cdots & \langle q_n, q_{n-k} \rangle \\
           \langle q_{n-1},q_n \rangle & \langle q_{n-1},q_{n-1} \rangle & \cdots & \langle q_{n-1}, q_{n-k} \rangle \\
             \vdots & \vdots & \cdots & \vdots \\
           \langle q_{n-k+1},q_n \rangle & \langle q_{n-k+1},q_{n-1} \rangle & \cdots & \langle q_{n-k+1},q_{n-k} \rangle \\ 
            q_n(x) & q_{n-1}(x) & \cdots & q_{n-k}(x)   
              \end{vmatrix}, 
\end{equation}
where the constant $d_{n+1,k}$ normalizes the function to have norm one.
For the normalizing constant $c_{n+1,k}$ we define
\begin{equation}   \label{Deltap}
   \Delta^p_{n,k}  =  \begin{vmatrix}
         \langle p_n,p_n \rangle & \langle p_n,p_{n-1} \rangle & \cdots & \langle p_n, p_{n-k} \rangle \\
           \langle p_{n-1},p_n \rangle & \langle p_{n-1},p_{n-1} \rangle & \cdots & \langle p_{n-1}, p_{n-k} \rangle \\
             \vdots & \vdots & \cdots & \vdots \\
           \langle p_{n-k},p_n \rangle & \langle p_{n-k},p_{n-1} \rangle & \cdots & \langle p_{n-k}, p_{n-k} \rangle
          \end{vmatrix}, 
\end{equation}
then it follows from \cref{fn+12k} that $\langle f^{n+1}_{n+1-2k},p_{n-j} \rangle = 0$ for $0 \leq j < k$, since this is equal to a determinant with two
equal rows, and $\langle f^{n+1}_{n+1-2k},p_{n-k} \rangle = c_{n+1,k} \Delta^p_{n,k}$. Now on $[0,1]$ we have $f^{n+1}_{n+1-2k}(x) = c_{n+1,k} \Delta^p_{n,k-1} p_{n-k}(x) + \cdots$, 
and by the symmetry properties of the functions $f_k^{n+1}$
\[   \int_{-1}^1 [f_{n+1-2k}^{n+1}(x)]^2\, dx = 2 \int_0^1 [f_{n+1-2k}^{n+1}(x)]^2 \, dx , \]
hence
\[    \frac12 = \langle f^{n+1}_{n+1-2k},f^{n+1}_{n+1-2k} \rangle = c_{n+1,k}^2 \Delta^p_{n,k}\Delta^p_{n,k-1}, \]
so that 
\begin{equation}   \label{normc}
    c_{n+1,k} = \frac{1}{\sqrt{2\Delta^p_{n,k}\Delta^p_{n,k-1}}}.  
\end{equation}
In a similar way we find
\begin{equation}    \label{normd}
    d_{n+1,k} = \frac{1}{\sqrt{2\Delta^q_{n,k}\Delta^q_{n,k-1}}}, 
\end{equation}
where
\begin{equation}   \label{Deltaq}
   \Delta^q_{n,k}  = \det \begin{pmatrix}
         \langle q_n,q_n \rangle & \langle q_n,q_{n-1} \rangle & \cdots & \langle q_n, q_{n-k} \rangle \\
           \langle q_{n-1},q_n \rangle & \langle q_{n-1},q_{n-1} \rangle & \cdots & \langle q_{n-1}, q_{n-k} \rangle \\
             \vdots & \vdots & \cdots & \vdots \\
           \langle q_{n-k},q_n \rangle & \langle q_{n-k},q_{n-1} \rangle & \cdots & \langle q_{n-k}, q_{n-k} \rangle
          \end{pmatrix}. 
\end{equation}

\subsection{Hypergeometric functions}
We will express the inner products $\langle p_n,p_k \rangle$ in terms of hypergeometric functions.

\begin{proposition}  \label{prop:ppqq}
The entries in the Gram-matrix for $k \leq n$ are given by
\begin{equation}  \label{pnkF}
  \int_0^1 p_n(x)p_k(x)\, dx = (-1)^{n+k} \frac{\Gamma(n+\frac12)}{(n+1)! \sqrt{\pi}}\ 
   {}_4F_3\left( \begin{array}{c} k+1, \frac12 , -\frac{k-1}{2}, -\frac{k}{2} \\ -n+\frac12 , \frac{n+2}2 , \frac{n+3}{2} \end{array} ; 1\right) ,
\end{equation}
and
\begin{equation}   \label{qnkF}
  \int_0^1 q_n(x)q_k(x)\, dx = (-1)^{n+k} \frac{\Gamma(n-\frac12)k}{2(n+2)!\sqrt{\pi}} \ 
    {}_4F_3\left( \begin{array}{c}  k+1, \frac32, -\frac{k-2}{2} , -\frac{k-1}{2} \\ -n + \frac32 , \frac{n+3}2 , \frac{n+4}2 \end{array};1 \right). 
\end{equation}
\end{proposition}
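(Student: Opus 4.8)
The plan is to reduce both inner products to single hypergeometric sums by expanding one factor and integrating term by term against the other, using the Mellin transforms already computed in \cref{prop:Mellin}. For \cref{pnkF} I would expand $p_k$ in powers of $x$ via \cref{pn} and integrate each monomial against $p_n$ using \cref{Mellinp} at $s=j$. Rewriting $\binom{k+\frac j2}{k}=(\frac j2+1)_k/k!$ and $(j+1)_{n+1}=(j+n+1)!/j!$ collapses the double sum to
\[
   \int_0^1 p_n(x)p_k(x)\,dx = (-1)^{n+k}\sum_{j=0}^{k}(-1)^j\frac{(\frac j2+1)_k\,(\frac{1-j}{2})_n}{(k-j)!\,(j+n+1)!}.
\]
As it stands this sum is \emph{not} hypergeometric, because the half-integer shift $\tfrac j2$ makes the ratio of consecutive terms irrational in $j$.

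The key observation that unlocks the proof is that the factor $(\frac{1-j}{2})_n$ — the numerator of the Mellin transform \cref{Mellinp} at $s=j$ — vanishes for every odd $j$ with $j\le 2n-1$, which is exactly the vanishing of the odd moments of $p_n$ noted in \cref{prop:Mellin}. Since the summation index satisfies $j\le k\le n\le 2n-1$, every odd-$j$ term drops out and only the even terms $j=2m$ survive. Restricting to $j=2m$ turns the problematic half-integer shift into an honest integer shift in $m$, so the surviving sum is genuinely hypergeometric.

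After this reduction I would factor out the $m=0$ term, which equals $(\tfrac12)_n/(n+1)!=\Gamma(n+\tfrac12)/\bigl(\sqrt{\pi}\,(n+1)!\bigr)$ and already supplies the prefactor in \cref{pnkF}. The remaining task is to verify that the ratio of consecutive surviving terms is
\[
   \frac{(k+1+m)(\frac12+m)(-\frac k2+m)(-\frac{k-1}{2}+m)}{(1+m)(-n+\frac12+m)(\frac{n+2}{2}+m)(\frac{n+3}{2}+m)},
\]
which is precisely the term ratio of the ${}_4F_3$ with upper parameters $k+1,\frac12,-\frac{k-1}{2},-\frac k2$ and lower parameters $-n+\frac12,\frac{n+2}{2},\frac{n+3}{2}$ at argument $1$; this establishes \cref{pnkF}. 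The proof of \cref{qnkF} is identical in structure, now using \cref{Mellinq}: the numerator $(-\frac j2)_n$ of the Mellin transform of $q_n$ vanishes for every even $j\le 2n-2$, so this time the even terms disappear and only $j=2m+1$ survive; pulling out the lowest surviving term $j=1$ produces the prefactor $\Gamma(n-\frac12)k/\bigl(2(n+2)!\sqrt{\pi}\bigr)$, and the term ratio in $m$ yields the shifted parameters of \cref{qnkF}.

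I expect the main obstacle to be purely bookkeeping rather than conceptual: one must carry the half-integer Pochhammer symbols carefully through the substitution $j=2m$ and confirm the duplication-type identities that convert the explicit term ratio into the target hypergeometric form, for instance $(k-2m)(k-2m-1)=4(-\frac k2+m)(-\frac{k-1}{2}+m)$ and $(\frac12-m-1)_n/(\frac12-m)_n=(-\frac12-m)/(n-\frac12-m)$. The genuinely illuminating point worth stressing is that the vanishing odd (respectively even) moments of $p_n$ (respectively $q_n$) are exactly what collapse a non-hypergeometric single sum into a single terminating ${}_4F_3$.
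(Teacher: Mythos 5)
Your proposal is correct and follows essentially the same route as the paper's proof: expand one factor via \cref{pn} or \cref{qn}, integrate termwise with the Mellin transforms \cref{Mellinp}--\cref{Mellinq}, use the vanishing odd (resp.\ even) moments to discard half the terms, and identify the surviving sum as a terminating ${}_4F_3$. The only difference is presentational -- you verify the consecutive-term ratio of the surviving sum, while the paper rewrites each term directly into Pochhammer form using identities such as $(k-2j)! = k!/\bigl(2^{2j}(-\tfrac{k}{2})_j(-\tfrac{k-1}{2})_j\bigr)$, which is the same bookkeeping in a different order.
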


\begin{proof}
For \cref{pnkF} we expand $p_k$ using \cref{pn} to find
\[   \langle p_n,p_k \rangle = (-1)^k \sum_{j=0}^k \binom{k}{j} \binom{k+\frac{j}2}{k} (-1)^j \int_0^1 x^j p_n(x)\, dx.  \]
The integral can be evaluated by using the Mellin transform \cref{Mellinp}, which gives
\[    \int_0^1 x^j p_n(x)\, dx = (-1)^n \frac{(\frac12 - \frac{j}2)_n}{(1+j)_{n+1}}, \]
and this is zero whenever $j$ is odd and $1 \leq j \leq 2n-1$. So we only need to consider even terms in the sum. This leaves
\begin{eqnarray*}
   \langle p_n,p_k \rangle &=& (-1)^{n+k} \sum_{j=0}^{\lfloor \frac{k}{2} \rfloor} \binom{k}{2j} \binom{k+j}{k} 
           \frac{(\frac12-\frac{j}2)_n}{(1+2j)_{n+1}} \\
     &= & (-1)^{n+k} \sum_{j=0}^{\lfloor \frac{k}{2} \rfloor} \frac{(k+j)! (\frac12 - j)_n}{(k-2j)! j! (2j+n+1)!} . 
\end{eqnarray*}
Now we can write
\[   (\frac12 -j)_n = (\frac12)_j \frac{(\frac12)_n}{(-n+\frac12)_j}   \]
and $(k+j)! = (k+1)_j k!$, so that
\[  \langle p_n,p_k \rangle = (-1)^{n+k} (\frac12)_n k!  \sum_{j=0}^{\lfloor \frac{k}{2} \rfloor} 
   \frac{(k+1)_j (\frac12)_j}{(k-2j)!(2j+n+1)! (-n+\frac12)_j j!}.     \]
Next, we have
\[   (k-2j)! =\frac{k!}{(-k)_{2j}} = \frac{k!}{2^{2j} (-\frac{k}{2})_j (-\frac{k-1}2)_j}, \]
and $(2j+n+1)! = 2^{2j} (\frac{n+2}2)_j (\frac{n+3}2)_j (n+1)!$,
so that
\[  \langle p_n,p_k \rangle = (-1)^{n+k} \frac{(\frac12)_n}{(n+1)!}  \sum_{j=0}^{\lfloor \frac{k}{2} \rfloor} 
    \frac{(k+1)_j (-\frac{k}2)_j (-\frac{k-1}2)_j (\frac12)_j}{(-n+\frac12)_j(\frac{n+2}2)_j (\frac{n+3}2)_j j!} , \]
and this coincides with the hypergeometric expression in \cref{pnkF}.
The computations for $\langle q_n,q_k \rangle$ are similar, starting from the expansion \cref{qn} and using the Mellin transform \cref{Mellinq}.
\end{proof}
   
Observe that both \cref{pnkF} and \cref{qnkF} are terminating hypergeometric series.

\subsection{Fourier transform}
\label{sec:Fourier}

We will now compute the Fourier transform of the multiwavelets $f_k^{n+1}$ for $k=n+1$ and $k=n$.
\begin{proposition}  \label{prop:Four}
One has
\[  \hat{f}_{n+1}^{n+1}(t) =  \int_{-1}^1 f^{n+1}_{n+1}(x) e^{ixt}\, dx = 2ic_{n+1,0}t^{2n+1} \frac{(-1)^n n!}{(3n+2)!} \
            {}_1F_2\left( \begin{array}{c}   n+1 \\ \frac{3n+3}2 , \frac{3n+4}2 \end{array}; -\frac{t^2}{4} \right), \]
and
\[   \hat{f}_n^{n+1}(t) =  \int_{-1}^1 f^{n+1}_{n}(x) e^{ixt}\, dx = 2d_{n+1,0}t^{2n} \frac{(-1)^n n!}{(3n+1)!} \
            {}_1F_2\left( \begin{array}{c}   n+1 \\ \frac{3n+2}2 , \frac{3n+3}2 \end{array}; -\frac{t^2}{4} \right). \]
\end{proposition}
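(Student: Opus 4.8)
The plan is to use the parity of the two multiwavelets to reduce each Fourier integral to a one-sided sine or cosine transform of $p_n$ or $q_n$, to expand the trigonometric kernel in a power series, and to integrate term by term using the Mellin transforms of \cref{prop:Mellin}. First I would record the parity. With multiplicity $n+1$, property~\ref{Alpert2} gives $f_{n+1}^{n+1}(-x)=(-1)^{2n+1}f_{n+1}^{n+1}(x)=-f_{n+1}^{n+1}(x)$ and $f_n^{n+1}(-x)=(-1)^{2n}f_n^{n+1}(x)=f_n^{n+1}(x)$, so $f_{n+1}^{n+1}$ is odd and $f_n^{n+1}$ is even. Writing $e^{ixt}=\cos(xt)+i\sin(xt)$ and discarding the integral of the wrong parity, I obtain
\[ \hat f_{n+1}^{n+1}(t)=2ic_{n+1,0}\int_0^1 p_n(x)\sin(xt)\,dx,\qquad \hat f_n^{n+1}(t)=2d_{n+1,0}\int_0^1 q_n(x)\cos(xt)\,dx, \]
after substituting the expressions from \cref{prop:fpq} valid on $(0,1]$.

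Next I would expand $\sin(xt)=\sum_{m\ge0}(-1)^m(xt)^{2m+1}/(2m+1)!$ and integrate term by term, which is justified since the series converges uniformly on $[0,1]$. The coefficients are the odd moments $\int_0^1 x^{2m+1}p_n(x)\,dx$, which by \cref{Mellinp} (with $s=2m+1$) equal $(-1)^n(-m)_n/(2m+2)_{n+1}$. The decisive observation is that $(-m)_n=0$ for $0\le m\le n-1$, so the series starts at $m=n$. Reindexing $m=n+\ell$ and using $(-m)_n=(-1)^n(n+\ell)!/\ell!$ together with $(2m+2)_{n+1}=(3n+2\ell+2)!/(2n+2\ell+1)!$ makes the factor $(2m+1)!$ cancel, leaving
\[ \int_0^1 p_n(x)\sin(xt)\,dx=(-1)^n t^{2n+1}\sum_{\ell\ge0}(-1)^\ell t^{2\ell}\,\frac{(n+\ell)!}{\ell!\,(3n+2\ell+2)!}. \]
Factoring out the $\ell=0$ term $n!/(3n+2)!$, the ratio of consecutive summands is
\[ -\frac{t^2}{4}\cdot\frac{n+1+\ell}{\bigl(\frac{3n+3}{2}+\ell\bigr)\bigl(\frac{3n+4}{2}+\ell\bigr)(\ell+1)}, \]
which is precisely the term ratio of the ${}_1F_2$ in the statement, with numerator parameter $n+1$ and denominator parameters $\frac{3n+3}{2},\frac{3n+4}{2}$; multiplying by the prefactor $2ic_{n+1,0}$ yields the first formula.

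The computation for $f_n^{n+1}$ is identical in structure: I expand $\cos(xt)=\sum_{m\ge0}(-1)^m(xt)^{2m}/(2m)!$ and use the even moments $\int_0^1 x^{2m}q_n(x)\,dx=(-1)^n(-m)_n/(2m+1)_{n+1}$ from \cref{Mellinq}. Again $(-m)_n$ forces the series to begin at $m=n$, and after the same reindexing the sum collapses to $(-1)^n t^{2n}\frac{n!}{(3n+1)!}$ times the ${}_1F_2$ with denominator parameters $\frac{3n+2}{2},\frac{3n+3}{2}$. The main obstacle is purely the Pochhammer and factorial bookkeeping: one must check that after reindexing the consecutive-term ratio matches the claimed hypergeometric parameters exactly, and that the prefactors $n!/(3n+2)!$ and $n!/(3n+1)!$ emerge with the correct signs; the parity reduction and the term-by-term integration are routine.
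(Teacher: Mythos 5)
Your proposal is correct and follows essentially the same route as the paper: parity reduces each Fourier integral to a sine (resp.\ cosine) transform of $c_{n+1,0}p_n$ (resp.\ $d_{n+1,0}q_n$) on $[0,1]$, the kernel is expanded in a power series, the moments are evaluated via \cref{Mellinp}--\cref{Mellinq}, the vanishing of $(-m)_n$ for $m<n$ shifts the series to start at $t^{2n+1}$ (resp.\ $t^{2n}$), and the reindexed sum is identified with the ${}_1F_2$. The only cosmetic difference is that the paper converts the factorials directly into Pochhammer symbols, e.g.\ $(2j+2n+1)!\,(2j+2n+2)_{n+1}=(3n+2)!\,2^{2j}(\tfrac{3n+3}2)_j(\tfrac{3n+4}2)_j$, while you identify the hypergeometric series through the consecutive-term ratio; both are valid bookkeeping for the same computation.
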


\begin{proof}
We have
\begin{eqnarray*}   
  \int_{-1}^1 f_{n+1}^{n+1}(x) e^{ixt}\, dx &=& \int_0^1 f_{n+1}^{n+1}(x) e^{ixt}\, dx - \int_0^1 f_{n+1}^{n+1}(x) e^{-ixt}\, dx  \\
    &=& 2i \int_0^1 f_{n+1}^{n+1}(x) \sin(xt)\, dx . 
\end{eqnarray*}
If we use the Taylor series expansion of the $\sin$ function, then the Fourier transform is
\[ \hat{f}_{n+1}^{n+1}(t) = 2i \sum_{k=0}^{\infty} (-1)^k \frac{t^{2k+1}}{(2k+1)!} \int_0^1 x^{2k+1} f_{n+1}^{n+1}(x)\, dx.  \]
On $(0,1]$ we have $f_{n+1}^{n+1}(x) = c_{n+1,0}p_n(x)$, so we can use the Mellin transform in \cref{Mellinp} to evaluate the integral:
\[    \hat{f}_{n+1}^{n+1}(t) = (-1)^n 2i c_{n+1,0} \sum_{k=0}^{\infty} (-1)^k \frac{t^{2k+1}}{(2k+1)!} \frac{(-k)_n}{(2k+2)_{n+1}} . \]
The first $n$ terms in the series are zero, so if we set $k=j+n$, then
\[  \hat{f}_{n+1}^{n+1}(t) = 2i c_{n+1,0} t^{2n+1} \sum_{j=0}^{\infty} (-1)^j \frac{t^{2j}}{(2j+2n+1)!} \frac{(-j-n)_n}{(2j+2n+2)_{n+1}} . \]
Now use 
\[ (2j+2n+1)! (2j+2n+2)_{n+1} = (2j+3n+2)! = (3n+2)!\ 2^{2j} \bigl( \frac{3n+3}2 \bigr)_j \bigl( \frac{3n+4}2 \bigr)_j, \]
and $(-j-n)_n = (-1)^n n! (n+1)_j/j!$ to find
\[  \hat{f}_{n+1}^{n+1}(t) = 2i c_{n+1,0} t^{2n+1} \frac{(-1)^n n!}{(3n+2)!} 
      \sum_{j=0}^{\infty} (-1)^j \frac{t^{2j}}{2^{2j}} \frac{(n+1)_j}{j! (\frac{3n+3}2)_j (\frac{3n+4}2)_j}, \]
which can be identified in terms of the hypergeometric series.

The proof for the second Fourier transform is similar. We now have
\[   \hat{f}^{n+1}_n(t) = 2  \int_0^1 f^{n+1}_n(x) \cos(xt)\, dx = 2 \sum_{k=0}^\infty (-1)^k \frac{t^{2k}}{(2k)!} \int_0^1 x^{2k} f_n^{n+1}(x)\, dx. \]
Now use $f_n^{n+1}(x) = d_{n+1,0} q_n(x)$ and the Mellin transform \cref{Mellinq} for $q_n$ to find
\[  \hat{f}_n^{n+1}(t) = (-1)^n 2 d_{n+1,0} \sum_{k=0}^{\infty} (-1)^k \frac{t^{2k}}{(2k)!} \frac{(-k)_n}{(2k+1)_{n+1}} . \]
Again the first $n$ terms in the series are zero, so the change $k=j+n$ gives 
\[   \hat{f}_n^{n+1}(t) = 2 d_{n+1,0} t^{2n} \sum_{j=0}^\infty (-1)^j \frac{t^{2j}}{(2j+2n)!} \frac{(-j-n)_n}{(2j+2n+1)_{n+1}} . \]
In this case we need to use
\[ (2j+2n)! (2j+2n+1)_{n+1} = (2j+3n+1)! = (3n+1)!\ 2^{2j} \bigl( \frac{3n+2}2 \bigr)_j \bigl( \frac{3n+3}2 \bigr)_j, \]
and some elementary work gives the required hypergeometric function.
\end{proof}

The Fourier transform of the other multiwavelets $f_k^{n+1}$ can be obtained by taking the sin-transform in \cref{fn+12k} or the cos-transform
in \cref{fn2k}. Both will give a determinant containing hypergeometric functions.

\section{Alpert multiwavelets and Legendre polynomials}
\label{sec:AlpertLeg}

In order to compute the multiwavelets, we introduce the notation
\[   \Psi_n(x) = \begin{pmatrix} f_1^n(2x-1) \\ f_2^n(2x-1) \\ \vdots \\ f_n(2x-1) \end{pmatrix}, \]
and the idea is to write 
\[   \Psi_n\bigl(\frac{x+1}{2}\bigr) = D_{-1}^n \Phi_n(x+1) + D_1^n \Phi_n(x)  \]
where
\[    \Phi_n(x) = \begin{pmatrix} \ell_0(x) \\ \ell_1(x) \\ \vdots \\ \ell_{n-1}(x) \end{pmatrix} \chi_{[0,1]} , \]
and $\ell_n(x)$ are the orthonormal Legendre polynomials on $[0,1]$ with $\chi_{A}$  the indicator function for the set $A$.
The scaling relation is
\[     \Phi_n(\frac{t+1}2) = C_{-1}^n \Phi_n(t+1) + C_1^n \Phi_n(t), \]
where $C_{-1}^n$ and $C_1^n$ are $n\times n$ matrices (which are lower triangular). 
The orthogonality of the Legendre polynomials gives 
\[   \int_{-1}^1  \Phi_n(\frac{x+1}2)\Phi_n^T(\frac{x+1}2)\, dx = 2 \mathbb{I}_n, \]
where $\mathbb{I}_n$ is the identity matrix of order $n$, so that  
\[   C_{-1}^n(C_{-1}^n)^T + C_1^n (C_1^n)^T = 2 \mathbb{I}_n.  \]
The orthogonality of Alpert's multiwavelets gives
\[   \int_{-1}^1 \Psi_n(\frac{x+1}2) \Psi_n^T(\frac{x+1}2)\, dx = \mathbb{I}_n  , \]
so that
\[   D_{-1}^n (D_{-1}^n)^T + D_1^n (D_1^n)^T = \mathbb{I}_n.  \] 
Furthermore, the multiwavelets are orthogonal to polynomials of degree $\leq n$, hence
\[   \int_{-1}^1 \Psi_n(\frac{x+1}2) \Phi_n^T(\frac{x+1}2)\, dx = \mathbb{O}_n, \]
so that
\[    D_{-1}^n(C_{-1}^n)^T + D_1^n (C_1^n)^T = \mathbb{O}_n.  \]

Let $n$ be even, then we write
\[   \Psi_n^e(x) = \begin{pmatrix} f_2(x) \\ f_4(x) \\ \vdots \\ f_n(x) \end{pmatrix}, \quad  
    \Psi_n^o(x) = \begin{pmatrix} f_1(x) \\ f_3(x) \\  \vdots \\ f_{n-1}(x) \end{pmatrix}.  \]
From \cref{fn+12k} and \cref{fn2k} we find that
\[   \Psi_n^e(x) = G_n^p \begin{pmatrix} p_{n/2}(x) \\ \vdots \\ p_{n-1}(x) \end{pmatrix}, \quad
     \Psi_n^o(x) = G_n^q \begin{pmatrix} q_{n/2}(x) \\ \vdots \\ q_{n-1}(x) \end{pmatrix}, \]
where $G_n^p$ and $G_n^q$ are $\frac{n}2 \times \frac{n}2$ upper triangular matrices. The orthogonality of the multiwavelets and the symmetry gives 
\[    \mathbb{I}_{n/2} = 2 \int_{0}^1 \Psi_n^e(x) \Psi_n^e(x)^T \, dx = 2G_n^p \int_{0}^1 \begin{pmatrix} p_{n/2}(x) \\ \vdots \\ p_{n-1}(x) \end{pmatrix}
     \begin{pmatrix} p_{n/2}(x) & \cdots & p_{n-1}(x) \end{pmatrix} (G_n^p)^T, \]
so that 
\[    (G_n^p)^{-1} (G_n^{pT})^{-1} = 2 \Bigl( \langle p_i,p_j \rangle \Bigr)_{i,j=n/2}^{n-1},  \]
and hence  $(G_n^p)^{-1} (G_n^{pT})^{-1}$ is the Cholesky factorization of the matrix $2 \Bigl( \langle p_i,p_j \rangle \Bigr)_{i,j=n/2}^{n-1}$.
Note however that this is an UL-factorization rather than an LU-factorization. 
In a similar way, $(G_n^q)^{-1} (G_n^{qT})^{-1}$ is the UL-Cholesky factorization of the positive definite matrix $2 \Bigl( \langle q_i,q_j \rangle \Bigr)_{i,j=n/2}^{n-1}$.
Next, we expand the $p_k$ and $q_k$ functions in terms of Legendre polynomials as
\[    \begin{pmatrix} p_0(x) \\ p_1(x) \\ \vdots \\ p_{n-1}(x) \end{pmatrix} = A_n \Phi_n(x), \quad
      \begin{pmatrix} q_0(x) \\ q_1(x) \\ \vdots \\ q_{n-1}(x) \end{pmatrix} = B_n \Phi_n(x), \]
where $A_n$ and $B_n$ are lower triangular $n \times n$ matrices. The orthonormality of the Legendre polynomials gives
\[      \int_0^1  \begin{pmatrix} p_0(x) \\ p_1(x) \\ \vdots \\ p_{n-1}(x) \end{pmatrix} 
       \begin{pmatrix} p_0(x) & p_1(x) & \cdots & p_{n-1}(x) \end{pmatrix}\, dx = A_n A_n^T, \]
so that $A_nA_n^T$ is the Cholesky factorization of the matrix $\Bigl( \langle p_i,p_j \rangle \Bigr)_{i,j=0}^{n-1}$ and similarly
$B_n B_n^T$ is the Cholesky factorization of $\Bigl( \langle q_i,q_j \rangle \Bigr)_{i,j=0}^{n-1}$.
Note that the positivity of the diagonal elements of the Cholesky factorization reflects that the leading coefficients of $p_k$ and $q_k$
are positive.
Combining this gives
\[    \Psi_n^e(x) = \Bigl( \mathbb{O}_{\frac{n}2,\frac{n}2} \ G_n^p \Bigr) A_n \Phi_n(x), \quad \Psi_n^o(x) = \Bigl( \mathbb{O}_{\frac{n}2,\frac{n}2} 
     \ G_n^q \Bigr) B_n \Phi_n(x), \]
where $\mathbb{O}_{n,m}$ is a $n\times m$ matrix containing only zeros,
and $D_1^n$ is obtained by intertwining $\Psi_n^e$ and $\Psi_n^o$, i.e.,
\[     D_1^n = \Bigl( \mathbb{O}_{n,\frac{n}2} \ \hat{G}_n^p \Bigr) A_n  + \Bigl( \mathbb{O}_{n,\frac{n}2} \ \hat{G}_n^q \Bigr) B_n, \]
where $\hat{G}_n^p$ is a stretched version of $G_n^p$ where zero rows have been inserted between every row, starting from the first row,
and $\hat{G}_n^q$ is obtained from $G_n^q$ by inserting zero rows starting from the second row:
\[    \hat{G}_n^p = \begin{pmatrix} 0 & 0 & 0 & \cdots & 0 \\ 1 & 0 & 0 & \cdots & 0 \\ 0 & 0 & 0 & \cdots & 0 \\ 0 & 1 & 0 & \cdots & 0 \\
               \vdots & & & \vdots & \vdots \\ 0 & 0 & 0 & \cdots & 0 \\ 0 & 0 & 0 & \cdots & 1 \end{pmatrix}  G_n^p,  \quad
       \hat{G}_n^q = \begin{pmatrix} 1 & 0 & 0 & \cdots & 0 \\ 0 & 0 & 0 & \cdots & 0 \\ 0 & 1 & 0 & \cdots & 0 \\ 0 & 0 & 0 & \cdots & 0 \\
               \vdots & & & \vdots & \vdots \\ 0 & 0 & 0 & \cdots & 1 \\ 0 & 0 & 0 & \cdots & 0 \end{pmatrix}  G_n^q.  \]

For $n$ odd we have to modify the method only slightly. We now use
\[   \Psi_n^e(x) = \begin{pmatrix} f_2(x) \\ f_4(x) \\ \vdots \\ f_{n-1}(x) \end{pmatrix}, \quad  
     \Psi_n^o(x) = \begin{pmatrix} f_1(x) \\ f_3(x) \\  \vdots \\ f_{n}(x) \end{pmatrix},  \]
so that $\Psi_n^e$ has $(n-1)/2$ elements and $\Psi_n^o$ has $(n+1)/2$ elements. From \cref{fn+12k} and \cref{fn2k} we now find
\[   \Psi_n^e(x) = G_n^q \begin{pmatrix} q_{(n+1)/2}(x) \\ \vdots \\ q_{n-1}(x) \end{pmatrix}, \quad
     \Psi_n^o(x) = G_n^p \begin{pmatrix} p_{(n-1)/2}(x) \\ \vdots \\ p_{n-1}(x) \end{pmatrix}, \]
where $G_n^p$ is a $\frac{n+1}2 \times \frac{n+1}2$ and $G_n^q$ a $\frac{n-1}2 \times \frac{n-1}2$ upper triangular matrix.
As before,  $(G_n^p)^{-1} (G_n^{pT})^{-1}$ is the UL-Cholesky factorization of the matrix $2 \Bigl( \langle p_i,p_j \rangle \Bigr)_{i,j=(n-1)/2}^{n-1}$ 
and $(G_n^q)^{-1} (G_n^{qT})^{-1}$ is the UL-Cholesky factorization of the positive definite matrix $2 \Bigl( \langle q_i,q_j \rangle \Bigr)_{i,j=(n+1)/2}^{n-1}$. The lower triangular matrices $A_n$ and $B_n$ are the same as before. The final result is that
\[     D_1^n = \Bigl( \mathbb{O}_{n,\frac{n-1}2} \ \hat{G}_n^p \Bigr) A_n  + \Bigl( \mathbb{O}_{n,\frac{n+1}2} \ \hat{G}_n^q \Bigr) B_n, \]
where the stretched matrices are now given by
\[    \hat{G}_n^p = \begin{pmatrix} 1 & 0 & 0 & \cdots & 0  \\ 0 & 0 & 0 & \cdots & 0 \\ 0 & 1 & 0 & \cdots & 0 \\ 0 & 0 & 0 & \cdots &  \\
               \vdots & & & \vdots & \vdots \\ 0 & 0 & 0 & \cdots & 0 \\ 0 & 0 & 0 & \cdots & 1 \end{pmatrix}  G_n^p,  \quad
       \hat{G}_n^q = \begin{pmatrix} 0 & 0 & 0 & \cdots & 0 \\ 1 & 0 & 0 & \cdots & 0 \\ 0 & 0 & 0 & \cdots & 0 \\ 0 & 1 & 0 & \cdots & 0 \\ 
               \vdots & & & \vdots & \vdots \\ 0 & 0 & 0 & \cdots & 1 \\ 0 & 0 & 0 & \cdots & 0 \end{pmatrix}  G_n^q.  \]

\appendix
\section{An alternative proof of \cref{prop:ppqq}}

The Mellin transforms \cref{Mellinp}--\cref{Mellinq} will be useful, if we combine this with Parseval's formula for the Mellin transform
\begin{equation}   \label{parseval}
   \int_0^\infty f(x)g(x)\, dx = \frac{1}{2\pi} \int_{-\infty}^\infty \hat{f}(-\frac12 +it) \overline{\hat{g}(-\frac12 + it)}\, dt, 
\end{equation}
where 
\[   \hat{f}(s) = \int_0^\infty f(x)x^s\, dx, \quad  \hat{g}(s) = \int_0^\infty g(x) x^s\, dx.  \]
By using Parseval's formula and \cref{Mellinp} we have
\[  \int_0^1 p_n(x)p_k(x)\, dx = (-1)^{n+k} \frac{1}{2\pi} \int_{-\infty}^\infty 
   \frac{\Gamma(\frac34+n - \frac{it}2) \Gamma(\frac34 + k+ \frac{it}2) |\Gamma(\frac12 +it)|^2}
        {\Gamma(\frac32+n +it) \Gamma(\frac32+k-it) |\Gamma(\frac34 +\frac{it}{2})|^2} \, dt.  \]
The arguments $it/2$ are unusual, so to get rid of them we change the variable $t=2s$ to find
\[  \langle p_n,p_k \rangle = (-1)^{n+k} \frac{1}{\pi} \int_{-\infty}^\infty
   \frac{\Gamma(\frac34+n - is) \Gamma(\frac34 + k+ is) |\Gamma(\frac12 +2is)|^2}
        {\Gamma(\frac32+n +2is) \Gamma(\frac32+k-2is) |\Gamma(\frac34 +is)|^2} \, ds.  \]
Now we use Legendre's duplication formula for the Gamma function
\[     \Gamma(2z) = \frac{2^{2z-1}}{\sqrt{\pi}} \Gamma(z)\Gamma(z+\frac12)  \]
to find
\begin{multline*}
  \langle p_n,p_k \rangle = (-1)^{n+k} 2^{-(n+k+1)}  \\
   \times \frac{1}{\pi} \int_{-\infty}^\infty
    \frac{\Gamma(\frac34+n-is) \Gamma(\frac34+k+is) |\Gamma(\frac14+is)|^2}
         {\Gamma(\frac{n}2+\frac34+is) \Gamma(\frac{n}2+\frac54+is) \Gamma(\frac{k}2 +\frac34 -is)\Gamma(\frac{k}2+\frac54-is)}\, ds . 
\end{multline*}
If we now change the variable $is$ to $y$, then we get a Mellin-Barnes integral defining a Meijer G-function
\begin{equation}   \label{pnG}
   \int_0^1 p_n(x)p_k(x)\, dx = (-1)^{n+k} 2^{-(n+k+1)} G_{4,4}^{2,2}\left( 1; \begin{array}{c}
           -k+\frac14, \frac34, \frac{k}{2}+\frac34, \frac{k}{2}+\frac54 \\
            n+\frac34, \frac14 , -\frac{n}{2}+\frac14, -\frac{n}2-\frac14  \end{array}  \right), 
\end{equation}
where the Meijer G-function is defined as
\[    G_{p,q}^{m,n}\left( z; \begin{array}{c} a_1,\ldots,a_p \\ b_1, \ldots,b_q \end{array} \right)
    = \frac{1}{2\pi i} \int_\Gamma \frac{ \prod_{j=1}^m \Gamma(b_j-s)  \prod_{j=1}^n \Gamma(1-a_j+s)}
                                        {\prod_{j=m+1}^q \Gamma(1-b_j+s) \prod_{j=n+1}^p \Gamma(a_j-s)} z^s\, ds,  \] 
where the path $\Gamma$ separates the poles of $\Gamma(b_j-s)$ from the poles of $\Gamma(1-a_j+s)$ \cite[\S 16.17]{DLMF,OLBC10}.
In our case we have $m=n=2$, $p=q=4$ and
\[    b_1=n+\frac34, \ b_2=\frac14, \ b_3 = -\frac{n}{2}+\frac14, \ b_4 = -\frac{n}2-\frac14 , \]
\[    a_1 = -k+\frac14, \ a_2 = \frac34, \ a_3 = \frac{k}2 + \frac34, \ a_4 = \frac{k}2 + \frac54.  \]
This Meijer G-function can be expressed as a linear combination of two hypergeometric functions
{\small
\begin{align*}
   G_{4,4}^{2,2}\left( z; \begin{array}{c} a_1,a_2,a3,a_4 \\ b_1,b_2,b_3,b_4 \end{array} \right)  
  =\  &\frac{z^{b_1}\Gamma(b_2-b_1)\Gamma(1+b_1-a_1)\Gamma(1+b_1-a_2)}{\Gamma(1+b_1-b_3)\Gamma(1+b_1-b_4)\Gamma(a_3-b_1)\Gamma(a_4-b_1)} \\
     &\times {}_4F_3\left( \begin{array}{c} 1+b_1-a_1,1+b_1-a_2,1+b_1-a_3,1+b_1-a_4 \\ 1+b_1-b_2,1+b_1-b_3,1+b_1-b_4 \end{array} ; z \right) \\
     &+ \ \frac{z^{b_2} \Gamma(b_1-b_2)\Gamma(1+b_2-a_1)\Gamma(1+b_2-a_2)}{\Gamma(1+b_2-b_3)\Gamma(1+b_2-b_4)\Gamma(a_3-b_2)\Gamma(a_4-b_2)} \\
     &\times {}_4F_3\left( \begin{array}{c} 1+b_2-a_1,1+b_2-a_2,1+b_2-a_3,1+b_2-a_4 \\ 1+b_2-b_1,1+b_2-b_3,1+b_2-b_4 \end{array} ; z \right).
\end{align*}}
We need the case $z=1$. Note that the ${}_4F_3$ hypergeometric functions are balanced \cite[\S 16.4 (i)]{DLMF,OLBC10} : for the first ${}_4F_3$
the sum of the parameters in the numerator is $4n+4$ and the sum of the denominator parameters is $4n+5$, for the second ${}_4F_3$ the
numerator parameters add up to 2 and the denominator parameters to 3. Observe that $a_3-b_1=\frac{k}2-n$ and $a_4-b_1=\frac{k+1}2-n$, hence one of these two is a negative integer, and the Gamma function $\Gamma(a_3-b_1)$ or $\Gamma(a_4-b)$ has a pole, so that the first term vanishes. This means that \cref{pnkF} follows.

In a similar way one can also obtain the following expression for the integrals involving the $q_n$ polynomials
\begin{equation}   \label{qnG}
       \int_0^1 q_n(x)q_k(x)\, dx = (-1)^{n+k} 2^{-(n+k+1)} G_{4,4}^{2,2}\left( 1; \begin{array}{c}
           -k+\frac34, \frac14, \frac{k}{2}+\frac34, \frac{k}{2}+\frac54 \\
            n+\frac14, \frac34 , -\frac{n}{2}+\frac14, -\frac{n}2-\frac14  \end{array}  \right). 
\end{equation}
which simplifies to \cref{qnkF}.

\section*{Acknowledgments}
The research of the first author was partially supported by Simons Foundation Grant 210169.
The research of the second author was partially supported by Simons Foundation Grant 280940.
The third author was supported by FWO research projects G.0934.13 and G.0864.16 and KU Leuven research grant OT/12/073.
This work was done while the third author was visiting Georgia Institute of Technology and he would like to thank
FWO-Flanders for the financial support of his sabbatical and the School of Mathematics at GaTech for their hospitality.

\bibliographystyle{siamplain}
\bibliography{multiwavelet-refs}
\end{document}